\newcommand{\dbt}[1]{\ifcase#1{}\or.5\or1\or1.5\or2\or2.5\or3\or3.5\or4\or4.5\fi}
\newcommand{\+}[1]{\mskip\dbt#1mu}
\renewcommand{\-}[1]{\mskip-\dbt#1mu}
\renewcommand\![1]{\bm #1}
\def\ClAuDiAeatblanks{\global\@ignoretrue}
\def\ClAuDiAstop{\relax}
\def\ClAuDiAsplit #1#2\stop{\string #1}
\def\[#1\]{\protect{\if\ClAuDiAsplit #1\stop[%
{\ClAuDiAwithlabel #1\ClAuDiAstop}\else%
{\ClAuDiAwithoutlabel #1\ClAuDiAstop}\fi}\ignorespaces}%
\def\ClAuDiAwithlabel[#1]#2\ClAuDiAstop{\protect{\begin{equation}\label{#1}\begin{split}#2\end{split}\end{equation}}}
\def\ClAuDiAwithoutlabel#1\ClAuDiAstop{\protect{\begin{equation*}\begin{split}#1\end{split}\end{equation*}}}
\newcommand{\set}[2]{\bigl\{\mskip1mu #1:#2\mskip1mu\bigr\}}
\newcommand{\sset}[1]{\bigl\{\mskip1mu#1\mskip1mu\bigr\}}
\let\macron=\=
\def\={\ifmmode\equiv\else \macron\fi}
\newcommand{\defemph}[1]{{\sl #1}}
\let\temp=\colon\def\colon{\temp\mathopen{}}
\def\mathrlap{\mathpalette\mathrlapinternal}
\def\mathrlapinternal#1#2{\rlap{$\mathsurround=0pt#1{#2}$}}
\def\kC{{\mathcal C}}
\def\kM{{\mathcal M}}
\def\kN{{\mathcal N}}
\def\kP{{\mathcal P}}
\def\kQ{{\mathcal Q}}
\def\PP{{\mathbb P}}
\def\RR{{\mathbb R}}
\def\ZZ{{\mathbb Z}}
\let\dotlessi=\i
\def\i{\ifmmode\mathrm i\else \dotlessi\fi}
\def\v{{\mathrm v}}
\numberwithin{equation}{section}
\newtheoremstyle{slantbody}{\topsep}{\topsep}{\slshape\def\defemph{\defemphrm}}{}{\bfseries}{.}{6pt}{\thmname{#1}\thmnumber{ #2}\thmnote{ #3}}
\theoremstyle{slantbody}
\newtheorem{theorem}{Theorem}[section]
\newtheorem{proposition}[theorem]{Proposition}
\newtheorem{lemma}[theorem]{Lemma}
\newcommand{\defemphsl}[1]{{\sl #1}}
\def\defemph{\defemphsl}
\theoremstyle{slantbody}
\newtheorem{definition}[theorem]{Definition}
\theoremstyle{remark}
\newtheorem{remark}[theorem]{Remark}
\newcommand{\res}{\operatorname{res}}
\newcommand{\vrt}{\operatorname{vert}}
\newcommand{\hor}{\operatorname{hor}}
\newcommand{\FL}{\!F\-2L}
\newcommand{\FpL}{\!F^+\-6L}
\newcommand{\FmL}{\!F^-\-6L}
\newcommand{\FTwoL}{\!F\+2^2\-5L}
\newcommand{\FptildeL}{\!F^+\-6\tilde L}
\begin{document}

\title{Error analysis of variational integrators of unconstrained
Lagrangian systems}

\author{
George W.\ Patrick and Charles Cuell\\[.05in]
\small Applied Mathematics and Mathematical Physics\\[-.05in]
\small Department of Mathematics and Statistics\\[-.05in]
\small University of Saskatchewan\\[-.05in]
\small Saskatoon, Saskatchewan, S7N~5E6, Canada
}

\date{\small June 2008$^\dag$}

\maketitle

\renewcommand{\thefootnote}{}
\footnotetext{$^\dag\backslash\mbox{today}$: \today}
\footnotetext{Mathematics Subject Classification (2000)\quad $\mbox{37M15}\;\cdot\;\mbox{49S05}\;\cdot\;\mbox{65P10}\;\cdot\;\mbox{70-08}\;\cdot\;\mbox{70HXX}$}

\vspace*{-.3in}\begin{abstract} A complete error analysis of
  variational integrators is obtained, by blowing~up the discrete
  variational principles, all of which have a singularity at zero
  time-step. Divisions by the time step lead to an order that is one
  less than observed in simulations, a deficit that is repaired with
  the help of a new past--future symmetry.
\end{abstract}

%
\section{Introduction}
%
%

We consider a regular Lagrangian system $L\colon\!T\kQ\to\RR$, and
associated Euler-Lagrange equations
\[
  -\frac d{dt}\frac{\partial L}{\partial\dot q^i}
  +\frac{\partial L}{\partial q^i}=0.
\]
Constraints if present are holonomic and are incorporated into the
configuration manifold $\kQ$.  Standard numerical integrators are
insensitive to the specialties of such conservative systems because
they discretize Euler-Lagrange equations as they do any other
differential equations.

Variational
integrators~\cite{
  MarsdenJE-PatrickGW-ShkollerS-1998-1,
  MarsdenJE-WestM-2001-1,
  WendlandtJM-MarsdenJE-1997-1} 
rather discretize Hamilton's variational principle
\[
  \delta\int_a^b L\bigl(q^\prime(t)\bigr)\,dt=0,\qquad
  \mbox{$q(a)$ and $q(b)$ constant}
\]
($q^\prime(t)\in\!T\kQ$ includes both coordinates $q^i$ and~$v^i$ in
this notation).  Such discretizations can be obtained by fixing a time
step~$h>0$ and approximating the integral with a finite sum, resulting
in a finite dimensional constrained optimization problem---a
\defemph{discrete Hamilton's principle}. The function under the sum,
called the \defemph{discrete Lagrangian}, is an approximation to the
type~1 generating function
\[
  S_h(q^+,q^-)=\int_0^hL\bigl(F_t\bigl(\Delta_h(q^+,q^-)\bigr)\bigr)\,dt
\]
where $\Delta_h(q^+,q^-)$ is the initial velocity at $q^-$ that
arrives at $q^+\approx q^-$ after time~$h$, and $F_t$ is the flow of
the Euler-Lagrange equations. The critical points of the discrete
Hamilton's principle satisfy discrete versions of the Euler-Lagrange
equations, which then define a numerical integrator for the original
Lagrangian system.

Marsden and~West~\cite{MarsdenJE-WestM-2001-1} consider the local
existence and uniqueness and the local error analysis (order) of
variational integrators. But these problems are quite subtle: the map
$\Delta_h(q^+,q^-)$ is singular as $h\to0^+$, since arbitrarily large
velocities are required to go from fixed $q^-$ to fixed $q^+$ in
vanishingly small time. We show (Section~\ref{sec:preview-by-example})
that terms not controlled by definitions of the order of a discrete
Lagrangian system, enter in the discrete Euler-Lagrange equations,
such that a complete error analysis requires additional argumentation
beyond that in~\cite{MarsdenJE-WestM-2001-1}.

We obtain a complete local existence and uniqueness, and local error
analysis, for variational integrators of Lagrangian systems
$L\colon\!T\kQ\to\RR$. To systematically treat the singularity, we use
the discretizations of Cuell and
Patrick~\cite{CuellC-PatrickGW-2008-1}, which
\begin{enumerate}
  \item 
    discretize the tangent bundle of~$\kQ$ using finite curve
    segments; and
  \item 
    place the discrete variational principle in the velocity phase
    space $\!T\kQ$.
\end{enumerate}
This shifts the singularity, to a degeneracy in the map that sends the
curve segments to their endpoints.  Viewing discrete tangent vectors
as curve segments is generally consistent with viewing discretizations
in general as attaching to a manifold finite rather than infinitesimal
objects~\cite{
  BobenkoAI-SchroderP-SullivanJM-ZieglerGM-2008-1,
  BobenkoAI-SurisYB-2005-1}.  
Our discrete tangent bundles are similar to the groupoid based
constructions of discrete phase spaces for Lagrangian systems
in~\cite{WeinsteinA-1996-1}.

To prove existence and uniqueness
(Theorem~\ref{thm:discrete-existence-uniqueness}), we blow up the
variational principle at $h=0$, converting it to a smooth perturbation
of the trivial, nonsingular optimization problem, with objective
$L(v_q)+L(w_q)$ and constraint $v_q+w_q=\mbox{constant}$. Divisions by
$h$ lead the arguments to an order that is one less than observed in
simulations. But the past and the future occur symmetrically in the
blown-up variational principle, giving a new $\ZZ_2$ symmetry
(exchange of $v_q$ and~$w_q$), from which we prove the observed order
follows by a nontrivial cancellation
(Theorem~\ref{thm:discrete-contact-order}). A similar cancellation
occurs in the error analysis in~\cite{PatrickGW-1991-1}, but that
development is restricted to type~1 generating functions of
kinetic-plus-potential systems, and it is framed non-variationally, on
cotangent bundles.

The use of extended phase spaces, particularly the Hamilton-Pontryagin
variational principles, where there is also additional kinematic
freedom, is an emergent interest~\cite{
  BouRabeeN-MarsdenJE-2008-1, 
  YoshimuraH-MarsdenJE-2006-1,
  YoshimuraH-MarsdenJE-2006-2}.  In this work, the extra kinematic
freedom in the discrete tangent bundle phase space, as opposed to
configuration space, is necessary as a place for the discrete
Lagrangian system to go, in the limit as $h\rightarrow0^+$.

We obtain semi-global results:
Theorem~\ref{thm:discrete-existence-uniqueness} asserts a well defined
discrete evolution for arbitrarily high velocities, if $h$ is
sufficiently small i.e.~the discrete evolution is defined for $(h,v)$
in an open neighborhood of $\sset0\times\!T\kQ$.  We anchor our
arguments directly to the discrete variational principles, avoiding
use of auxiliary constructions, such as the discrete Euler-Lagrange
equations, or canonical contexts on the cotangent bundles. These are
important and useful, but it is also important and useful to obtain
the discrete variational picture stand-alone, as a coherent whole.
For all these purposes, and also to make clear the geometry underlying
the error analysis, our theory is coordinate free.

We extensively use the development
in~\cite{CuellC-PatrickGW-2007-1}. In this paper, all manifolds are
assumed paracompact and Hausdorff.

%
\section{Preview by example}
\label{sec:preview-by-example}
%

The following example-oriented preview provides a good understanding
of our motivations, and of the results of the general theory in
Sections~\ref{sec:discrete-existence-and-uniqueness}
and~\ref{sec:order}.

%
\subsection{Definition}
%

We consider $\kQ\equiv\RR=\sset{q}$, with $\!T\kQ=\RR^2=\sset{(q,v)}$
and Lagrangian
\[[eq:eg-lagrangian-def]
  L\equiv\frac12g(q)v^2-V(q)
\]
where $g(q)>0$ and~$V(q)$ are smooth functions. Such systems describe
a particle moving in Euclidean space under the influence of a
potential, and constrained to some curve parameterized by the
variable~$q$.

The discrete phase space for this system is
$\kQ\times\kQ=\sset{(q^+,q^-)}$, and a discrete Lagrangian may be
defined by
\[[eq:eg-Lh-def]
  L_h(q^+,q^-)\equiv hL\left(q^-,\frac{q^+-q^-}h\right)+ah(q^+-q^-),
\]
where $a$ is a constant that is necessary for the purpose of the
example. The discrete action on a sequence $q_0,q_1,q_2\in\kQ$ is
\[[eg:eq-Sh-def]
  S_h\equiv L_h(q_1,q_0)+L_h(q_2,q_1).
\]
From this data, a variational integrator is defined: given an initial
discrete state $(q_1,q_0)$, a time~$h$ advanced discrete state
$(q_2,q_1)$ is computed from the requirement that $(q_0,q_1,q_2)$ is a
critical point of $S_h$, under the constraints that $q_0$ and $q_2$
are not varied. This is the standard setup of discrete
mechanics~\cite{MarsdenJE-WestM-2001-1,WendlandtJM-MarsdenJE-1997-1}.

%
\subsection{Order}
%

Substituting into $L_h$ the flow of Euler-Lagrange equations i.e.
\[
  q^-=q, \qquad q^+=q+vt+O(t^2)
\]
gives, after putting $t=h$,
\[[eq:eg-Lh-sub]
  L_h(q,v)=hL(q,v)+ah^2v=L_h(q,v)=hL(q,v)+O(h^2).
\]
The integral of $L$ along the flow is
\[[eq:eg-qv-action]
  \int_0^hL\bigl(q+O(t),v+O(t)\bigr)\,dt=hL(q,v)+O(h^2).
\]
By standard definition, a numerical integrator $y_{i+1}=F(y_i,t)$ of
differential equation $y^\prime=f(y,t)$ is order~$r$ if the local
truncation error $y_1-y(t+h)$ is $O(h^{r+1})$ as $h\rightarrow0$.
According to Equation~2.3.1 of~\cite{MarsdenJE-WestM-2001-1}, $L_h$ is by
definition order~1 since~\eqref{eq:eg-Lh-sub}
and~\eqref{eq:eg-qv-action} agree through order~$h^1$. So, Item~(1) of
Theorem~2.3.1 of~\cite{MarsdenJE-WestM-2001-1} asserts that the
variational integrator defined by $L_h$ should be order~1.

Indeed, the critical points of $S_h$ are exactly the solutions of the
\defemph{discrete Euler-Lagrange equations}
\[[eq:eg-DEL-general]
  \FmL_h(q_2,q_1)=\FpL_h(q_1,q_0)
\]
where 
\[
  \FpL_h\equiv\frac{\partial L_h}{\partial q^+},\qquad 
    \FmL_h\equiv-\frac{\partial L_h}{\partial q^-}
\]
are the \defemph{discrete Legendre transforms}. One computes
\[
  &\FpL_h=\frac1hg(q^-)(q^+-q^-)+a\times h,\\
  &\FmL_h=\frac1hg(q^-)(q^+-q^-)+hV^\prime(q^-)+a\times h
    -\frac1{2h}g^\prime(q^-)(q^+-q^-)^2,
\]
after which Equations~\eqref{eq:eg-DEL-general} become
\[[eq:eg-DEL-Lh]
  \frac1hg(q_1)(q_2-q_1)+hV^\prime(q_1)+a\times h&-
    \frac1{2h}g^\prime(q_1)(q_2-q_1)^2\\
  &\qquad=\frac1hg(q_0)(q_1-q_0)+a\times h.
\]
Assuming $q_1-q_0=O(h)$, this implies
\[
  \frac{q_2-2q_1+q_0}{h^2}=\frac{-1}{g(q_0)}\left(V^\prime(q_0)
    +\frac1{2h^2}g^\prime(q_0)(q_1-q_0)^2\right)+O(h),
\]
which is evidently order~1, after comparison with the Euler-Lagrange
equations of $L$ defined by~\eqref{eq:eg-lagrangian-def}, which are
\[
  \frac{d^2q}{dt^2}=\frac{-1}{g(q)}\left(V^\prime(q)
    +\frac12g^\prime(q)v^2\right).
\]
Item~(1) in Theorem~2.3.1 of~\cite{MarsdenJE-WestM-2001-1} asserts
that the discrete evolution should be order~1, and it is.

%
\subsection{Discrete Legendre transforms}
%

Something, however, is not quite right: \emph{the terms $a\times h$ in
  the discrete Lagrangian $L_h$ are not controlled by the definition
  of an order~1 discrete Lagrangian, because they enter
  Equations~\eqref{eq:eg-Lh-sub} and~\eqref{eq:eg-qv-action} only at
  order~2. In the discrete Euler Lagrange
  equations~\eqref{eq:eg-DEL-Lh} such terms appeared twice, and at the
  same order as the potential, which they could have significantly
  altered. They appeared symmetrically on both sides of the equation,
  and therefore did not affect the consistency of the variational
  integrator. Nevertheless, they did affect the separate terms
  appearing as discrete Legendre transforms in the discrete
  Euler-Lagrange equations.}

Item~2 in Theorem~2.3.1 of~\cite{MarsdenJE-WestM-2001-1} asserts that the
discrete Legendre transforms $\FpL_h$ and $\FmL_h$ should, after the
substitutions $q=q^-$, $q=q^-+hv$, be order~1 consistent with the
exact Legendre transform of $L$. This shifted the term $ah(q^+-q^-)$
to order~2, because it enforced $q^+-q^-=O(h)$. So \emph{that} term
did not affect the order~1 consistency of the discrete
Lagrangians. However, \emph{that} term did affect the discrete
Legendre transforms at order~1, because the derivatives in the
Legendre transform removed $q^+-q^-$. Therefore the definition of an
order~1 discrete Lagrangian did not control the order~1 consistency of
the discrete Legendre transforms. \emph{The fault may be assigned to
  the $1/h$ singularity:} It was decided to clear that singularity in
the definition of the order of discrete Lagrangians, because the
definition of order is necessarily about $h=0$. When the singularity
was cleared, the term $ah(q^+-q^-)$ was knocked out of the definition,
but it affected the discrete equations of motion. The situation was
saved only because $ah(q^+-q^-)$ appeared in the discrete equations of
motion as a pair of identical twins.

Suppose we view variational integrators just as the discrete
Euler-Lagrange equations, discarding the variation theory
altogether. Possibly for some purpose of numerical implementation,
imagine being motivated to use different discrete Lagrangians in the
separate terms of the discrete Euler-Lagrange equations i.e.~to use
rather than~\eqref{eq:eg-DEL-general}, the equations
\[
  \FmL_h(q_2,q_1)=\FptildeL_h(q_1,q_0),
\]
where $L\ne\tilde L$. Then the order of the corresponding numerical
integrator would almost certainly be reduced by~$1$, despite the fact
that each Lagrangian would separately satisfy all prescribed order
conditions. And, in the case of order~1 Lagrangians, the result would
almost certainly be an inconsistent integrator.

%
\subsection{Example analysis: blow up}
%

To analyze the discrete variational principle as $h\rightarrow0$,
change to new variables $q,v,\tilde q,\tilde v$ defined by
\[[eq:eg-q-v-tq-tv]
  q\equiv q_0,\qquad v\equiv\frac{q_1-q_0}h,\qquad 
  \tilde q\equiv q_1,\qquad \tilde v\equiv\frac{q_2-q_1}h.
\]
There are four new variables but originally there were only
$q_0,q_1,q_2$, so there will be a new constraint. Defining
\[
  \partial_h^-(q,v)\equiv q,\qquad\partial_h^+(q,v)\equiv q+hv,
\]
one has, from~\eqref{eq:eg-q-v-tq-tv},
\[[eq:eg-constraints-TQ]
  \partial_h^-(q,v)=q_0,\qquad 
  \partial_h^+(q,v)=q_1=\partial_h^-(\tilde q,\tilde v),\qquad
  \partial_h^+(\tilde q,\tilde v)=q_2.
\]
The new constraint is the middle of these; the variations in the
discrete variational principle are restricted to annihilate the
derivatives of the outer two. The action becomes
\[[eg:eq-Sh-TQ]
  S_h\equiv L_h(q,v)+L_h(\tilde q,\tilde v),\qquad L_h(q,v)= hL(q,v)+ah^2v,
\]
which, together with the constraints~\eqref{eq:eg-constraints-TQ}, is
an equivalent variational principle on $\!T\kQ\times
\!T\kQ=\sset{(q,v),(\tilde q,\tilde v)}$.

We will desingularized the variational principle defined
by~\eqref{eq:eg-constraints-TQ},~\eqref{eg:eq-Sh-TQ}, in three stages:
\begin{enumerate}
  \item 
    The critical points are unchanged by multiplication of $S_h$ by any
    constant. So $L_h$ in~\eqref{eg:eq-Sh-TQ} can be replaced with its
    division by $h$. This results in a replacement of $S_h$ with its
    division by $h$, although the formula for $S_h$ is unchanged. The
    variational principle, objective and constraints, becomes
    \[[eq:eg-principle-desingularized0]
      \left\{\begin{array}{l}
        \displaystyle\hat S_h\equiv \hat L_h(q,v)+\hat L_h(\tilde q,\tilde v),\quad 
          \hat L_h(q,v)\equiv L(q,v)+ahv,\\[5pt]
        \displaystyle \partial_h^-(q,v)=q_0,\quad 
          \partial_h^+(q,v)=q_1=\partial_h^-(\tilde q,\tilde v),\quad
          \partial_h^+(\tilde q,\tilde v)=q_2.
      \end{array}\right.
    \]
  \item
    The middle constraint of~\eqref{eq:eg-principle-desingularized0} is
    $q+hv=\tilde q$. This constraint can be imposed by substituting
    $\tilde q$, resulting in a principle on the variables $(q,v,\tilde
    v)$. The variational principle becomes
    \[[eq:eg-principle-desingularized1]
      \left\{\begin{array}{l}
        \displaystyle\hat S_h\equiv \hat L_h(q,v)+\hat L_h(q+hv,\tilde v),
          \quad\hat L_h(q,v)\equiv L(q,v)+ahv,\\[5pt]
        \displaystyle q=q_0,\quad q+hv+h\tilde v=q_2.
      \end{array}\right.
    \]
  
  \item
    At $h=0$, the fixed endpoint constraints
    of~\eqref{eq:eg-principle-desingularized1} degenerate to the same
    function, namely $(q,v,\tilde v)\mapsto q$. To remove this
    degeneracy, these can be post-composed by any smooth bijection,
    such as
    \[
      \bar q=\frac{q_0+q_2}2,\qquad z=\frac{q_2-q_0}h.
    \]
    The variational principle becomes
    \[[eq:eg-principle-desingularized]
      \left\{\begin{array}{l}
        \displaystyle\hat S_h\equiv \hat L_h(q,v)+\hat L_h(q+hv,\tilde v),
          \quad\hat L_h(q,v)\equiv L(q,v)+ahv,\\[5pt]
        \displaystyle q+\frac h2(v+\tilde v)=\bar q,\quad v+\tilde v=z.
      \end{array}\right.
    \]
\end{enumerate}
The desingularization is complete because the
principle~\eqref{eq:eg-principle-desingularized} is smooth and, as
will be seen, nonsingular, through $h=0$. The \defemph{blown-up
  variational principle} is obtained
from~\eqref{eq:eg-principle-desingularized} by substituting $h=0$ i.e.
\[[eq:eg-principle-blownup]
  \hat S_0\equiv L(\bar q,v)+L(\bar q,\tilde v),\qquad
  v+\tilde v=z.
\]
For small $h$, the variational
principle~\eqref{eq:eg-principle-desingularized} may be regarded as a
continuous perturbation of~\eqref{eq:eg-principle-blownup}.

%
\subsection{Example analysis: discrete existence and uniqueness}
%

Re-introducing the details of the example
Lagrangian~\eqref{eq:eg-lagrangian-def}, the blown-up variational
principle becomes
\[[eg:eq-blownup]
  \hat S_0=\frac{g(\bar q)}2(v^2+\tilde v^2)-V(\bar q),\qquad
  v+\tilde v=z.
\]
Substituting $\tilde v=z-v$ and differentiating gives, since $\bar q$
is constant in \eqref{eg:eq-blownup},
\[
  \frac{d\hat S_0}{dv}=g(\bar q)(2v-z)=0,
\]
so, given $z$, there is the solution $v=\tilde v=z/2$. The second
derivative at this critical point is
\[
  \frac{d^2\hat S_0}{dv^2}=2g(\bar q),
\]
which is positive, so the critical point is
nondegenerate. Nondegenerate critical points vary smoothly with
parameters and constraint values.  In particular, for any $\bar q_0$
and $z_0$, the original
principle~\eqref{eq:eg-constraints-TQ},~\eqref{eg:eq-Sh-TQ} has a
unique critical point $(v,\tilde v)=\gamma(h,\bar q,z)$ for all
sufficiently small $h$, and all $\bar q\approx\bar q_0$ and $z\approx
z_0$. $\gamma$ is smooth and at $h=0$ its graph is an open set of the
diagonal of $\!T\kQ\times\!T\kQ$ i.e.\ the graph of the identity
map. So, for sufficiently small $h$ the graph of $\gamma$, and hence
the variational principle, locally defines a near-identity
time-advance map of $\!T\kQ$ to itself i.e.\ a discrete evolution. For the
general results, see
Definitions~\ref{def:discretization-of-tangent-bundle}
and~\ref{def:discretization-of-Lagrangian-system},
and~Theorem~\ref{thm:discrete-existence-uniqueness}

%
\subsection{Example analysis: order}
%

Error analyses of variational integrators has purpose to give
conditions on the basic data of the discrete variational principles
that are sufficient to imply that the corresponding numerical
integrators are order~$r$.  This follows if:
\begin{enumerate}
\item  
exact discrete variational principles are obtained that give the exact
solution of the continuous Lagrangian system; and
\item
two variational principles that are order~$r+1$ consistent have
solutions that are order~$r+1$ consistent.
\end{enumerate}
A discrete variational principle is then defined order~$r+1$ if its
objective and constraints are order $r+1$ consistent with those of the
exact discrete variational principle, and then Item~(2) implies that
the corresponding numerical integrator is order~$r$.

Item~(1) seems not to be clarified by specializing to
Example~\eqref{eq:eg-Lh-def}, but rather seems more clear in the
geometric setting; see~Lemma~\ref{lem:no-corners} and
Theorem~\ref{thm:exact-discretization-gives-exact-flow}. Essentially,
in gross aspect, solutions of an exact discrete variational principle
correspond to cornered solutions of the continuous variational
principle. However, by the well known Weierstrass-Erdman
conditions~\cite{GelfandIM-FominSV-1963-1}, regular variational
principles do not have corners i.e.\ critical points are unaffected if
the space of solutions is enlarged to allow corners. Therefore,
solutions of the continuous variational principle are solutions of the
exact discrete variational principle. But then the critical points of
both principles are the same because of uniqueness i.e.\ the solutions
of the discrete principle are exact.

Item~(2) \emph{is} clarified by the example context.  The
desingularized principle~\eqref{eq:eg-principle-desingularized} has
discrete action
\[
\hat S_h=L(q,v)+L(q+hv,\tilde v)+a\times h\,v+a\times h\,\tilde v.
\]
At a solution $h=0$ and $v=\tilde v$, $\hat S_h$ is symmetric under the
exchange $v\leftrightarrow\tilde v$, not just at order $h^0$, but also
at order $h^1$, because the original minus the exchanged is
\[
 &\hat S(q,v,\tilde v)-\hat S(q,\tilde v,v)\\
 &\qquad=L(q,v)+L(q+hv,\tilde v)-L(q,\tilde v)-L(q+h\tilde v,v)+ah(v-\tilde v)\\
 &\qquad=L(q,v)-L(q,\tilde v)
  +h\frac{\partial L}{\partial q}(q,\tilde v)v
  -h\frac{\partial L}{\partial q}(q,v)\tilde v+ah(v-\tilde v)+O(h^2),
\]
which is $O(h^2)$ at $\tilde v=v$.  So the constraint $\tilde q=q+hv$
of ~\eqref{eq:eg-constraints-TQ}, and the term $a\times h$, affect the
solutions of the discrete variational principles only symmetrically at
order~$h^1$. Considering the solution of the discrete principle as a
perturbation from $h=0$ i.e.\ a perturbation of the graph of the
identity map, the effect on the solutions is symmetric to order
$h^1$. But such a symmetric alteration does not change, at the same
order, the function defined by the graph. In particular, the solutions
of the variational principle are unaffected, at order $h^1$, by the
term $a\times h$. For the general results, see
Definitions~\ref{def:exact-discretization}
and~\ref{def:order-r-discretizations}, and
Theorem~\ref{thm:discrete-order}.

%
\section{Discrete existence and uniqueness}
\label{sec:discrete-existence-and-uniqueness}
%

Our discretizations of Lagrangian systems depend on discretizations of
tangent bundles of manifolds~$\kM$, by assignment of curve segments
in~$\kM$ to tangent vectors of $\kM$~\cite{CuellC-PatrickGW-2008-1}.
We require a parameter~$h$ such that $\!T\kM$ is obtained in the limit
$h\rightarrow0^+$, so we posit a map $\psi(h,t,v_m)$, with values in
$\kM$, and obtain the curve segments $t\mapsto\psi(h,t,v_m)$. The
definition builds in some flexibility. The curve segments are
generated as the variable $t$ ranges over intervals of length~$h$,
with otherwise unrestrained endpoints: for example, $[0,h]$
and~$[-h/2,h/2]$ are common choices which are both accommodated.

\begin{definition}\label{def:discretization-of-tangent-bundle}
  A \defemph{$C^k$ discretization of $\!T\kM$}, $k\ge1$, is a tuple
  $(\psi,\alpha^+,\alpha^-)$, where 
  \[
    \psi\colon U\subseteq\RR^2\times\!T\kM\rightarrow\kM,\quad
    \alpha^+\colon[0,a)\rightarrow\RR_{\ge0},\quad
    \alpha^-\colon[0,a)\rightarrow\RR_{\le0},
  \] 
  are such that
  \begin{enumerate}
    \item 
      $\psi$ is continuous, $U$ is open, and
      $\sset{0}\times\sset{0}\times\!T\kM\subseteq U$;
    \item
      $\alpha^+,\alpha^-$ are $C^1$, and $\alpha^+(h)-\alpha^-(h)=h$;
    \item\label{enum:def-psiprop} 
      $\psi(h,0,v_m)=m$, and
      $\displaystyle\frac{\partial\psi}{\partial t}(h,0,v_m)=v_m$;
    \item\label{enum:def-boundary-maps} 
      the \defemph{boundary maps} defined by
      \[[100]
        \partial^-_h(v_m)\=\psi\bigl(h,\alpha^-(h),v_m\bigr),
        \qquad\partial^+_h(v_m)\=\psi\bigl(h,\alpha^+(h),v_m\big),
      \]
      are $C^k$ in $(h,v_m)$ and 
      \[[101]
        \left.\frac d{dh}\right|_{h=0}\partial^+_h(v_m)=\dot\alpha^+v_m,\quad
        \left.\frac d{dh}\right|_{h=0}\partial^-_h(v_m)=\dot\alpha^-v_m
      \]
      where
      \[
        \dot\alpha^+\=\frac{d\alpha^{\mbox{}\mathrlap{+}}}{dh}\,(0),\qquad
        \dot\alpha^-\=\frac{d\alpha^{\mbox{}\mathrlap{-}}}{dh}\,(0).
      \]
  \end{enumerate}
\end{definition}

\begin{remark}
Putting $h=0$ in $\alpha^+(h)-\alpha^-(h)=h$ gives
$\alpha^+(0)=\alpha^-(0)=0$ because $\alpha^+\ge0$ and
$\alpha^-\le0$. If $\psi$ is a $C^1$ map in all its variables then
Equations~\eqref{101} are superfluous because they follow by
differentiating Equations~\eqref{100}. Also, note that at $h=0$,
$\partial_h^+=\partial_h^-=\tau_\kQ$, and differentiating
$\alpha^+(h)-\alpha^-(h)=h$ at $h=0$ gives
$\dot\alpha^+-\dot\alpha^-=1$.\qed
\end{remark}

Given a discretization of the tangent bundle of configuration space,
one only need add an appropriate discrete Lagrangian to obtain a
discretization of a Lagrangian system.
See~\cite{CuellC-PatrickGW-2008-1} for more explanation, and a general
development along the lines used in this paper, of discrete Lagrangian
mechanics and discretizations of Lagrangian systems, extending to
nonholonomic systems.  Here we do not require the full generality, and
what is required, specialized to the holonomic case, is collected in
Definition~\ref{def:discretization-of-Lagrangian-system}.

\begin{definition}\label{def:discretization-of-Lagrangian-system}
A \emph{$C^k$ discretization}, $k\ge1$, of a Lagrangian system
$L\colon\!T\kQ\rightarrow\RR$, is a tuple
$(L_h,\psi,\alpha^+,\alpha^-)$ where 
\begin{enumerate}
\item $(\psi,\alpha^+,\alpha^-)$ is a
$C^k$ discretization of $\+3\!T\kQ$; and 
\item $L_h\colon\!T\kQ\rightarrow\RR$
is $C^k$ in $(h,v_q)$ is such that $L_h(v_q)=hL(v_q)+O(h^2)$.
\end{enumerate}
$(h,v_0,\tilde v_0)\in\RR\times\!T\kQ\times\!T\kQ$ is \emph{critical}
if $v=v_0$, $\tilde v=\tilde v_0$ is a critical point of the
variational principle
\[
  \left\{\begin{array}{l}
    \displaystyle S_h\equiv L_h(v)+\hat L_h(\tilde v),\\[5pt]
    \mbox{$\partial_h^-(v)$ and $\partial_h^+(
      \tilde v)$ constant, and $\partial_h^+(v)=\partial_h^-(\tilde v)$.}
  \end{array}\right.
\]
A \defemph{discrete evolution} is a map $F$ defined on an open subset
of $\RR\times\!T\kQ$ such that $\bigl(h,v,F(h,v)\bigr)$ is critical
for all $(h,v)$ in the domain of $F$.
\end{definition}

\begin{remark}
Precisely, $(h,v_0,\tilde v_0)$ is critical if 
\[
  \mbox{$\displaystyle\!dS_h(v,\tilde v)(\delta v,\delta\tilde v)=0$
  \quad and\quad
  $\displaystyle\partial_h^+(v)=\partial_h^-(\tilde v)$}
\]
for all $\delta v\in\!T_v\kQ$ and $\delta\tilde v\in\!T_{\tilde v}\kQ$
such that
\[
 \!T\partial_h^-(\delta v)=0,\quad\!T\partial_h^+(\delta\tilde v)=0,\quad 
 \!T\partial_h^+(\delta v)=\!T\partial_h^-(\delta\tilde v).
\]
We have placed the discrete variational principle in sequences in
velocity phase space, so there is a discrete analogue of the
(continuous) \defemph{first order constraint} $q(t)^\prime=v(t)$ where
$q(t)=\tau_\kQ\circ v(t)$ i.e. the successive curve segments must join
to make a continuous whole.  The fixed endpoint constraints correspond
to $\partial_h^-(v)$ and $\partial_h^+(\tilde v)$ constant, which
affect only the variations; the actual constraint values are not
specified. This provides, as is usual in both discrete and continuous
Lagrangian mechanics, the necessary freedom to accommodate initial
conditions.\qed
\end{remark}

\begin{remark}
Suppose that $L^{\kQ\times\kQ}_h(q^+,q^-)$ is a discrete Lagrangian as
defined in~\cite{MarsdenJE-WestM-2001-1} and let $\psi$ be a
discretization of $\kQ$ as in
Definition~\ref{def:discretization-of-Lagrangian-system}. As shown
in~\cite{CuellC-PatrickGW-2008-1},
$\Psi_h(v)\equiv\bigl(\partial_h^+(v),\partial_h^-(v)\bigr)$ is a
diffeomorphism from a neighborhood of the zero section of $\!T\kQ$ to
a neighborhood of the diagonal of $\kQ\times\kQ$. $L_h\equiv
L^{\kQ\times\kQ}_h\circ\Psi_h$ is a discretization as in
Definition~\ref{def:discretization-of-Lagrangian-system}, and
conversely, any such discretization defines a discrete Lagrangian as
in~\cite{MarsdenJE-WestM-2001-1} by $L^{\kQ\times\kQ}_h\equiv
L_h\circ\Psi_h^{-1}$. In any case, the discrete mechanics on $\!T\kQ$
and $\kQ\times\kQ$ are equivalent because they are conjugated by
$\Psi$.  For more details, see Section~\ref{sec:Q-times-Q}.\qed
\end{remark}

At $h=0$, the discrete action is singular, and the constraints are
degenerate, because
\begin{enumerate}
\item 
at $h=0$, $L_h(v)=0$, so $S_h(v,\tilde v)=L_h(v)+L_h(\tilde v)=0$; and
\item 
at $h=0$, $\partial_h^+(v)=\partial_h^-(\tilde v)$ is
$\tau_\kQ(v)=\tau_\kQ(\tilde v)$, and on that constraint the fixed
endpoint constraints $\partial_h^-(v)=q^-$ and $\partial_h^+(\tilde
v)=\tilde q^+$ are replicates.
\end{enumerate}
The necessary blow-ups rely on a technical result
of~\cite{CuellC-PatrickGW-2007-1} that we recall as
Proposition~\ref{prp:vb-div-by-zero} below. This is an invariant
version of the elementary calculus fact, a version of L'Hospital's
rule, that if $\hat f(x)=f(x)/h(x)$ where $f(x)$ and $h(x)$ are $C^1$
functions such that $f(0)=0$ and $h^\prime(0)\ne0$, then $\hat f(x)$
can be continuously extended through $x=0$ by defining $\hat
f(0)=f^\prime(0)/h^\prime(0)$.

If $\pi\colon E\rightarrow\kM$ is a vector bundle, and $z\in
\!T_{0_m}E$, then we denote the horizontal and vertical parts of $z$ by
$\hor z\in\!T_m\kM$ and $\vrt z\in E_m$, respectively. We denote the
zero section of $E$ by $0(E)$. Also, the statement of
Proposition~\ref{prp:vb-div-by-zero} uses the convention that a pair
$(\kM,h_\kM)$ is called a \defemph{manifold} when $\kM$ is a manifold
and $h_\kM\colon\kM\to\mathbb{R}$ is a submersion.

\begin{proposition}\label{prp:vb-div-by-zero}
  Let $(\kM,h_\kM)$ and $\kN$ be a manifolds, and let $\pi\colon
  E\rightarrow \kN$ be a vector bundle. Suppose that $f\colon
  U\subseteq\kM\rightarrow E$ is~$C^k$, $k\ge1$, and that $f(m)\in
  0(E)$ whenever~$h_\kM(m)=0$. Then for all $m$ such that
  $h_\kM(m)=0$, there is a unique $e(m)\in E_{\pi(f(m))}$ such that
  \[
    \vrt\!T_mf(v_m)=\bigl(dh_\kM(m)v_m\bigr)e(m),\qquad v_m\in\!T_m\kM.
  \]
  Moreover, the function $\hat f\colon U\rightarrow E$ defined by
  \[
    \hat f(m)\equiv\begin{cases}
    \displaystyle\frac{f(m)}{h_\kM(m)},&h_\kM(m)\ne0,\\[7pt]
    e(m),&h_\kM(m)=0,\end{cases}
  \]
  is $C^{k-1}$.
\end{proposition}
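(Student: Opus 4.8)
The plan is to treat the two assertions separately: first the pointwise, linear-algebraic existence and uniqueness of $e(m)$ along the hypersurface $\{h_\kM=0\}$, and then the $C^{k-1}$ regularity of $\hat f$. The second part is where the work lies, but since $\hat f$ is manifestly well defined---division of the vector $f(m)$ by the nonzero scalar $h_\kM(m)$ makes invariant sense in the fibre $E_{\pi(f(m))}$---and since smoothness is a local property, I would reduce it to the scalar Hadamard lemma in a chart adapted to the submersion $h_\kM$.

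For the existence and uniqueness of $e(m)$, fix $m$ with $h_\kM(m)=0$ and set $n=\pi(f(m))$, so that $f(m)=0_n$ lies in the zero section. Composing $\!T_mf\colon\!T_m\kM\to\!T_{0_n}E$ with the canonical vertical projection at the zero section yields a linear map $\vrt\!T_mf\colon\!T_m\kM\to E_n$. Since $h_\kM$ is a submersion, $\{h_\kM=0\}$ is a codimension-one submanifold with tangent space $\ker dh_\kM(m)$ at $m$, and $f$ carries it into $0(E)$; hence for $v_m\in\ker dh_\kM(m)$ the image $\!T_mf(v_m)$ is tangent to $0(E)$, so it is horizontal and $\vrt\!T_mf(v_m)=0$. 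Thus $\vrt\!T_mf$ annihilates $\ker dh_\kM(m)$. Because $dh_\kM(m)\colon\!T_m\kM\to\RR$ is surjective, any linear map killing its kernel factors uniquely through it, and the factor is precisely the required $e(m)\in E_n$; this settles existence and uniqueness at once.

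For the regularity of $\hat f$, fix a base point $m_0$ of the hypersurface and choose coordinates $(x^0,x^1,\dots,x^{d-1})$ near $m_0$, abbreviated $x'=(x^1,\dots,x^{d-1})$, in which $h_\kM=x^0$, so that $\{h_\kM=0\}=\{x^0=0\}$. Shrinking the neighbourhood, $\pi\circ f$ takes values in a chart of $\kN$ over which $E$ trivialises with fibre $\RR^r$, and there $f=(\pi\circ f,F)$ with base component $\pi\circ f$ of class $C^k$ and vector component $F$ of class $C^k$ satisfying $F|_{x^0=0}=0$. For $x^0\neq0$ the vector part of $\hat f$ is $F/x^0$. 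The scalar Hadamard lemma, applied componentwise, gives $F(x^0,x')=x^0\,G(x^0,x')$ with $G(x^0,x')=\int_0^1(\partial_0F)(tx^0,x')\,dt$; as $\partial_0F$ is $C^{k-1}$ and integration against the parameter $t$ preserves this class, $G$ is $C^{k-1}$ and equals $F/x^0$ wherever $x^0\neq0$. Hence the vector part of $\hat f$ extends across $\{x^0=0\}$ as the $C^{k-1}$ function $G$, while its base part $\pi\circ f$ is $C^k$, so $\hat f$ is $C^{k-1}$. Finally I would identify this extension with $e$: since $F|_{x^0=0}=0$, the tangential derivatives $\partial_iF$ with $i\ge1$ vanish there, whence $\vrt\!T_mf(v_m)=(\partial_0F)(0,x')\,(dx^0\,v_m)=\bigl(dh_\kM(m)v_m\bigr)(\partial_0F)(0,x')$ in the trivialisation; comparison with the defining relation for $e(m)$ and with $G(0,x')=(\partial_0F)(0,x')$ gives $e=G$ on $\{x^0=0\}$.

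The main obstacle is this last portion of the regularity argument: verifying that the naive quotient extends $C^{k-1}$ and that its boundary values coincide with the invariantly defined $e$. The delicate bookkeeping is to keep the canonical vertical splitting at the zero section consistent with the chosen bundle trivialisation, so that the coordinate condition $F|_{x^0=0}=0$ faithfully encodes the hypothesis that $f$ maps $\{h_\kM=0\}$ into $0(E)$. Once the adapted chart is in place, the Hadamard lemma supplies the analysis and the loss of exactly one derivative is transparent.
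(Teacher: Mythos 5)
Your proof is correct, but there is nothing in the paper to compare it against: Proposition~\ref{prp:vb-div-by-zero} is recalled without proof from the companion paper \cite{CuellC-PatrickGW-2007-1}, motivated in the text only as ``an invariant version of L'Hospital's rule.'' Your argument is the natural one and matches that stated intent exactly: the existence and uniqueness of $e(m)$ by factoring the linear map $\vrt\!T_mf$ through the surjective functional $dh_\kM(m)$ (using that $f$ carries the level hypersurface into $0(E)$, so $\vrt\!T_mf$ kills $\ker dh_\kM(m)$), and the regularity by straightening the submersion to $x^0$, trivializing $E$, and applying the parametric Hadamard lemma, with the boundary identification $G(0,x')=\partial_0F(0,x')=e(m)$ closing the loop; your observation that vector-bundle charts respect both the canonical vertical identification and the zero-section splitting is precisely what makes the coordinate computation faithful. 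The only point worth making explicit is that the shrunken chart should be taken as a product $(-\epsilon,\epsilon)\times B$ in the coordinates $(x^0,x')$, so that the segments $(tx^0,x')$, $t\in[0,1]$, appearing in Hadamard's integral remain in the domain --- your ``shrinking the neighbourhood'' supplies this implicitly, and the extension is chart-independent since it agrees with $f/h_\kM$ on the dense set where $x^0\ne0$.
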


Theorem~\ref{thm:discrete-existence-uniqueness} is the main result on
discrete existence and uniqueness. This goes beyond just finding
regularity conditions on the discrete Lagrangian that imply existence
and uniqueness, because it only relies on regularity of the
\emph{continuous} Lagrangian---it is an analysis of the limit
$h\to0$. For the discrete analogue of regularity in our context, and
the corresponding existence and uniqueness result,
see~\cite{CuellC-PatrickGW-2008-1}, and in the standard context that
is Theorem~1.5.1 of~\cite{MarsdenJE-WestM-2001-1}.

Recall that the first and second fiber derivatives of
$L\colon\!T\kQ\to\RR$ are $\FL(v_q)=\!D(L|\!T_q\kQ)(v_q)$ and
$\FTwoL(v_q)=\!D^2(L|\!T_q\kQ)(v_q)$, and that $L$ is called
\defemph{regular} if $\FTwoL$ has values only in the nondegenerate
quadratic forms on the fibers of $\!T\kQ$. If $A$ is a set, then we
denote by $\Delta(A\times A)$ the diagonal of $A\times A$.

\begin{theorem}\label{thm:discrete-existence-uniqueness}
Let $(L_h,\psi,\alpha^+,\alpha^-)$ be a $C^k$ discretization of a
regular Lagrangian system $L\colon\!T\kQ\rightarrow\RR$, $k\ge2$.
Then there are neighborhoods $W\subseteq\RR\times\!T\kQ$ of
$\sset{0}\times\!T\kQ$ and $U\subseteq\RR\times\!T\kQ\times\!T\kQ$ of
$\sset{0}\times\triangle(\!T\kQ\times\!T\kQ)$ such that, for all
$(h,v)\in W$, $h>0$, there is a unique $\tilde v\in\!T\kQ$ such that
$(h,v,\tilde v)\in U$ and $(h,v,\tilde v)$ is critical. Moreover, $U$
and $W$ can be chosen such that $F\colon W\rightarrow\!T\kQ$ defined by
\[
  F(h,v)\=\begin{cases}\tilde v,&h>0,\\v,&h=0,\end{cases}
\]
is $C^{k-1}$.
\end{theorem}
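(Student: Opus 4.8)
The plan is to remove the singularity at $h=0$ by blowing up the variational principle exactly as in the example of Section~\ref{sec:preview-by-example}, converting it into a smooth constrained optimization problem that is nonsingular through $h=0$, and then to read off existence, uniqueness and regularity of $F$ from the persistence of nondegenerate critical points under smooth variation of parameters. The regularity of the \emph{continuous} Lagrangian $L$ is what ultimately supplies nondegeneracy at $h=0$, so the construction reduces the theorem, in the limit, to the triviality that $L(v)+L(\tilde v)$ has a nondegenerate critical point on $v+\tilde v=z$.

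First I would desingularize in three invariant stages, each division by $h$ being legitimized by Proposition~\ref{prp:vb-div-by-zero}. (a)~Rescale: since $L_h(v_q)=hL(v_q)+O(h^2)$ vanishes at $h=0$, the quotient $\hat L_h\equiv L_h/h$ is $C^{k-1}$ with $\hat L_0=L$, and replacing $S_h$ by $S_h/h$ leaves the critical points unchanged for $h>0$ while producing a nontrivial objective $\hat S_h\equiv\hat L_h(v)+\hat L_h(\tilde v)$ at $h=0$. (b)~Eliminate the joining constraint: the map $\Psi_h(v)\equiv(\partial_h^+(v),\partial_h^-(v))$ lets one solve $\partial_h^+(v)=\partial_h^-(\tilde v)$ for the base point of $\tilde v$ in terms of $(h,v)$, reducing the number of independent variables. (c)~Separate the degenerate endpoint constraints: at $h=0$ the two fixed-endpoint maps $\partial_h^-$ and $\partial_h^+$ both collapse to $\tau_\kQ$, so I would post-compose the pair $(\partial_h^-(v),\partial_h^+(\tilde v))$ with a smooth bijection splitting it into a base point $\bar q$ (a midpoint) and a divided difference $z$ (again furnished by Proposition~\ref{prp:vb-div-by-zero}), whose differentials are independent through $h=0$.

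Next I would analyze the blown-up principle obtained by setting $h=0$: extremize $L(v)+L(\tilde v)$ over $v,\tilde v$ in the single fibre $\!T_{\bar q}\kQ$ subject to $v+\tilde v=z$. The Lagrange condition is $\FL(v)=\FL(\tilde v)$; since $L$ is regular, $\FL$ restricts to a local diffeomorphism on each fibre, so near the diagonal the only solution is $v=\tilde v=z/2$, and the reduced Hessian of the constrained problem is $\FTwoL(v)+\FTwoL(\tilde v)=2\FTwoL(z/2)$, which is nondegenerate. Thus the blown-up critical point is unique and nondegenerate, and its graph at $h=0$ is the diagonal, i.e.\ the identity. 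Because nondegenerate critical points persist and vary with the regularity of the data, the implicit function theorem then yields, for $(h,\cdot)$ near $\sset0\times\Delta(\!T\kQ\times\!T\kQ)$, a unique critical point depending smoothly on $(h,\bar q,z)$ and equal to the diagonal at $h=0$; unwinding the reductions defines $F$ on a neighborhood $W$ of $\sset0\times\!T\kQ$, with $F(0,v)=v$, and fixes $U$ and $W$.

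The main obstacle is the regularity bookkeeping needed to reach $C^{k-1}$ (and to cover $k=2$). Each application of Proposition~\ref{prp:vb-div-by-zero} costs one derivative, and the naive route---divide $S_h$ by $h$ to get the $C^{k-1}$ objective $\hat S_h$, then differentiate to form the critical-point equation---loses a \emph{second} derivative, producing only a $C^{k-2}$ evolution and requiring $k\ge3$. To obtain the sharp result one must arrange that only a single net division by $h$ enters the equation whose solution is $F$: I would treat the rescaled first variation (equivalently, the discrete Legendre transforms $\FpL_h$ and $\FmL_h$, so that the large-velocity $1/h$ scaling of the admissible variations is paired against the $O(h)$ of $dL_h$) as the primary object, obtained by a \emph{single} application of Proposition~\ref{prp:vb-div-by-zero} to the $C^k$ data, and apply the implicit function theorem to \emph{that} $C^{k-1}$ map rather than to the derivative of an already-divided action. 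Verifying this sharp accounting, and that the stage-(c) reparametrization of the constraints is a genuine desingularization invariantly (a submersion through $h=0$) with nondegenerate linearization, is the technical heart of the argument.
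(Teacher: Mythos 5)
Your outline reproduces the paper's construction faithfully in its structural respects: the paper likewise divides $L_h$ by $h$ (Proposition~\ref{prp:vb-div-by-zero} gives $\hat L\in C^{k-1}$ with $\hat L(0,\cdot)=L$), imposes the joining constraint $\partial_h^+(v)=\partial_h^-(\tilde v)$ first (kept as the submanifold $\kC$, using that $\partial_h^\pm$ are submersions near $h=0$, rather than eliminated by substitution), and blows up the remaining endpoint constraints through exactly your midpoint/difference splitting, implemented invariantly by a tubular neighborhood $\zeta$ of the normal bundle $E$ of $\Delta(\kQ\times\kQ)$ and the map $\hat\varphi=\frac1h\zeta^{-1}\bigl(\partial_h^+(\tilde v),\partial_h^-(v)\bigr)$, whose $h=0$ value $\frac12(v+\tilde v,-v-\tilde v)$ is computed from Proposition~\ref{prp:vb-div-by-zero} and the tangent space of $\kC$. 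The blown-up principle at $h=0$, the unique solution $v=\tilde v=z_q$ with Hessian $2\FTwoL$, nondegenerate by regularity of $L$, and the final graph-of-the-identity argument producing $F$ are all as in the paper.

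However, your last paragraph---which you rightly flag as the technical heart---does not work as stated, and it is the one place where the paper uses something you have not supplied. Count derivatives in your proposed repair: the discrete Legendre transforms are derivatives of the $C^k$ function $L_h$, hence only $C^{k-1}$; they vanish identically at $h=0$, so your ``single application'' of Proposition~\ref{prp:vb-div-by-zero} to them yields a $C^{k-2}$ equation, and the implicit function theorem then gives only a $C^{k-2}$ evolution---exactly the same count as the divide-then-differentiate route you are trying to avoid ($\hat S_h\in C^{k-1}$, $d\hat S_h\in C^{k-2}$). Any arrangement of elementary implicit-function-theorem steps pays once for the derivative inherent in forming a critical-point equation and once for the division by $h$, so no such rearrangement reaches $C^{k-1}$, and for $k=2$ your scheme produces no differentiability at all. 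The paper does not resolve this internally: it invokes Theorem~2 of~\cite{CuellC-PatrickGW-2007-1}, a persistence theorem for manifolds of nondegenerate critical points tailored to these blown-up problems, which passes from the $C^{k-1}$ data $(\hat L,\hat\varphi)$ to a $C^{k-1}$ critical-point map $\hat\gamma$ with no further derivative loss, and which is moreover semiglobal---it directly furnishes neighborhoods of all of $\sset{0}\times\triangle(\!T\kQ\times\!T\kQ)$, whereas your pointwise implicit function theorem would still need a patching argument via uniqueness; Proposition~5 of the same reference then converts the graph of $\pi_{23}\circ\hat\gamma$ into the $C^{k-1}$ map $F$. So the sharp regularity claim of Theorem~\ref{thm:discrete-existence-uniqueness} rests on the companion paper's critical-point persistence theorem, and the ``single net division'' bookkeeping you sketch in its place is arithmetically unable to deliver it.
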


\begin{proof}
The blow-up of $L_h$ is immediate: set
\[
  \hat L(h,v_q)\=\begin{cases}\displaystyle\frac1h L_h(v_q),&h\ne 0,\\[5pt]
  L(v_q),&h=0.\end{cases}
\]
$\hat L$ is $C^{k-1}$ by Proposition~\ref{prp:vb-div-by-zero}.

The constraints are blown-up by imposing
$\partial_h^+(v)=\partial_h^-(\tilde v)$, after which $\partial_h^-(v)$
and $\partial_h^+(\tilde v)$ are $O(h)$ close and their difference can
be usefully divided by $h$. For this, observe that both $\partial_h^+$
and $\partial_h^-$ are submersions on $\!T\kQ$ when $h=0$, since
$\partial^+_0(v_q)=q$ and $\partial^-_0(v_q)=q$, so there is a
neighborhood~$A\supseteq\sset{0}\times\!T\kQ$ on which both
$\partial_h^+$ and $\partial_h^-$ are submersions. Consequently
\[
  \kC\=\set{(h,v,\tilde v)}{(h,v)\in A,(h,\tilde v)\in A,
  \partial_h^+(v)=\partial_h^-(\tilde v)}
\]
is a submanifold of $\RR\times\!T\kQ\times\!T\kQ$. Also, there is a
tubular neighborhood 
\[
  \zeta\colon W^{0(E)}\subset E\rightarrow 
  W^{\kQ\times\kQ}\subset\kQ\times\kQ=\sset{(q^+,q^-)}
\] 
of the normal bundle $E\=\set{(v_q,-v_q)}{v_q\in\!T\kQ}$ to the diagonal
$\Delta(\kQ\times\kQ)$ of $\kQ\times\kQ$, which satisfies
\[[420]
  \vrt\!T\zeta^{-1}(v_q^+,v_q^-)= 
  \left(\frac12(v_q^+-v_q^-),\frac12(v_q^--v_q^+)\right).
\]

The purpose of $\zeta^{-1}$ is to compute the difference between two
nearby elements of $\kQ$. For example, if $\kQ=\RR^n$ we can use
$\zeta(v_q,-v_q)\equiv(q,q)+(v_q,-v_q)$, and then 
\[
  \zeta^{-1}(q^+,q^-)=(v_q,-v_q),\quad v_q
  =\left(\frac{q^++q^-}2,\frac{q^+-q^-}2\right).
\]
i.e.\ the fiber part of $E$ corresponds to the difference. Just below,
in the definition of $\hat\varphi$, scalar multiplication of $E$ by
$1/h$ will be used to blow up the difference. See the proof
Proposition~1.9 of~\cite{CuellC-PatrickGW-2008-1} for more details
about arranging Equation~\eqref{420}.

Define $\hat\varphi\colon\kC\rightarrow\RR\times E$ by
\[
  \hat\varphi(h,v,\tilde v)\=\begin{cases}\displaystyle
    \left(h,\frac 1h\zeta^{-1}
      \bigl(\partial_h^+(\tilde v),\partial_h^-(v)\bigr)\right),&h\ne0,\\[12pt]
    \displaystyle\left(h,\frac12(v+\tilde v,-v-\tilde v)\right),&h=0.
  \end{cases}
\]
and define $\varphi$ and $h_\kC$ on $\kC$ by
\[
  \varphi(h,v,\tilde v)\=\zeta^{-1}\bigl(\partial_h^+(\tilde v),
  \partial_h^-(v)\bigr),
  \qquad h_\kC(h,v,\tilde v)\=h.
\]
If $(h,v,\tilde v)\in\kC$ and $h=0$ then
$\tau_\kQ(v)=\partial_0^+(v)=\partial_0^-(\tilde v)=\tau_\kQ(\tilde
v)$. Hence for all $(h,v,\tilde v)\in\kC$, $\varphi(h,v,\tilde
v)=0$ if $h_\kC(h,v,\tilde v)=0$. By
Proposition~\ref{prp:vb-div-by-zero}, for all $v,\tilde v$ there is a
unique $e(v,\tilde v)$ such that
\[
  \vrt\!T\varphi(0,v,\tilde v)(\delta h,\delta v,\delta\tilde v)
  &=\bigl(e(v,\tilde v),-e(v,\tilde v)\bigr)
  \!dh_\kC(0,v,\tilde v)(\delta h,\delta v,\delta\tilde v)\\
  &=\bigl(e(v,\tilde v),-e(v,\tilde v)\bigr)\delta h,
\]
for all $(\delta h,\delta v,\delta\tilde v)\in\!T_{(0,v,\tilde v)}\kC$.
By Items~\ref{enum:def-psiprop} and~\ref{enum:def-boundary-maps}
of Definition~\ref{def:discretization-of-tangent-bundle}, $(\delta
h,\delta v,\delta\tilde v)\in\!T_{(0,v,\tilde v)}\kC$ if and only if
\[[440]
 \!T\tau_\kQ(\delta v)+\delta h\,\dot\alpha^+ v
  =\!T\tau_\kQ(\delta\tilde v)+\delta h\,\dot\alpha^-\tilde v
\]
and using Equation~\eqref{420}, and the definition of $\varphi$, 
\[
  \vrt\!T\varphi(0,v,\tilde v)(\delta h,\delta v,\delta\tilde v)
  =\frac12(w,-w),
\]
where
\[
  w\=\!T\tau_\kQ(\delta\tilde v)+\delta h\,\dot\alpha^+\tilde v
  -\!T\tau_\kQ(\delta v)-\delta h\,\dot\alpha^-v.
\]
It follows that $e(v,\tilde v)$ can be found by imposing
\[[470]
  \frac12\Bigl(\!T\tau_\kQ(\delta\tilde v)+\delta h\,\dot\alpha^+\tilde v
  -\!T\tau_\kQ(\delta v)-\delta h\,\dot\alpha^-v\Bigr)
  =\delta h\,e(v,\tilde v)
\]
for all $\delta h$, $\delta v$, and $\delta\tilde v$ which satisfy
Equation~\eqref{440}.  Using~\eqref{440} to replace
$\!T\tau_\kQ(\delta\tilde v)-\!T\tau_\kQ(\delta v)$ in
Equation~\eqref{470} gives
\[
  \frac12\Bigl(\delta h\,\dot\alpha^+v-\delta h\,\dot\alpha^-\tilde v
  +\delta h\,\dot\alpha^+\tilde v
  -\delta h\,\dot\alpha^-v\Bigr)
  =\frac12\delta h\,(v+\tilde v)=\delta h\,e(v,\tilde v)
\]
so $e(v,\tilde v)=\frac12(\tilde v+v)$, after which the definition of
$\varphi$ at $h=0$, and Proposition~\ref{prp:vb-div-by-zero} applied
to $\varphi$, imply $\hat\varphi$ is $C^{k-1}$.

If $h>0$, then finding the critical points of the discrete variational
principle is equivalent to finding the critical points of $\hat
L|\hat\varphi^{-1}(h,z_q,-z_q)$.  If $h=0$, then the latter is the
problem of finding the critical points $(v_0,\tilde v_0)$ of
$L(v)+L(\tilde v)$, $v,\tilde v\in\!T_q\kQ$ subject to the constraint
$\frac12(v+\tilde v)=z_q$. These are the $v$ and $\tilde v=2z_q-v$
such that $L(v)+L(2z_q-v)$ has a critical point at $v$ i.e.\ such that
\[
  \FL(v)-\FL(\tilde v)=0,\qquad\tilde v=2z_q-v.
\]
This has the solution $v=\tilde v=z_q$, at which the Hessian of
$L(v)+L(2z_q-v)$ is $2\FTwoL(v)$, which is nondegenerate. Thus there
is a manifold of nondegenerate critical points parametrized by
$z_q\in\!T\kQ$.  Semiglobal persistence of these critical points
follows by Theorem~2 of~\cite{CuellC-PatrickGW-2007-1} i.e.\ there are
neighborhoods $\hat
U\supseteq\sset{0}\times\triangle(\!T\kQ\times\!T\kQ)$ and $\hat
V\supseteq\sset{0}\times E$, and a $C^{k-1}$ map $\hat\gamma\colon\hat
V\rightarrow\hat U$, such that for all $(h,z_q,-z_q)\in\hat V$,
$\hat\gamma(h,z_q,-z_q)$ is the unique critical point in $\hat U$ of
$\hat L|\hat\varphi^{-1}(h,z_q,-z_q)$.

At $h=0$, $\hat\gamma(h,z_q,-z_q)=(0,z_q,z_q)$, and the image of
$\hat\gamma$ forms the graph of the identity map of
$\!T\kQ$. Consequently, for small $h$, $\hat\gamma$ determines a map
$F$ because $\hat\gamma$ has image a graph.  The technical statements
in the Theorem to this effect are immediate from Proposition~5
of~\cite{CuellC-PatrickGW-2007-1}, applied to the map
$\pi_{23}\circ\hat\gamma$, where $\pi_{23}(h,v,\tilde v)=(v,\tilde
v)$.
\end{proof}

\begin{remark}\label{rem:blown-up-principle}
The proof of Theorem~\ref{thm:discrete-existence-uniqueness} shows the
blow-up at $h=0$ of the discrete variational principles gives the
variational principles with action $L(v)+L(\tilde v)$, where $v$ and
$\tilde v$ are constrained (1)~to be in the same fiber of $\!T\kQ$,
and (2)~such that $v+\tilde v$ is constant. \emph{The blown-up
  variational principle is past-future symmetric i.e.\ symmetric under
  the exchange of $v$ and $\tilde v$.}\qed
\end{remark}

%
\section{Order}
\label{sec:order}
%
%

If the curve segments of the discretizations of the tangent bundle of
$\kQ$ are obtained from the base integral curves of the Euler-Lagrange
vector field $X_E$, and the discrete Lagrangian is the classical
action, then we obtain the \defemph{exact discretizations} of Marsden
and West~\cite{MarsdenJE-WestM-2001-1}. Exact discretizations are
important because they exactly generate the flow of the Euler-Lagrange
equation of the continuous Lagrangian, (Theorem~1.6.4
of~\cite{MarsdenJE-WestM-2001-1}), so that the order of a
discretization can be controlled by reference to its order of
consistency with an exact discretization.

\begin{definition}\label{def:exact-discretization}
An {\em exact discretization} of a Lagrangian system $L\colon
\!T\kQ\rightarrow\RR$ is a discretization $(L_h,\psi,\alpha^+,\alpha^-)$
where $\psi$ and $L_h$ satisfy
  \begin{enumerate}
    \item 
      $\psi(h,t,v_q)\=\tau_\kQ\bigl(F^{X_E}_t(v_q)\bigr)$, where
      $F^{X_E}_t$ is the flow of $X_E$; and
    \item 
      $\displaystyle L_h(v_q)\=\int_{\alpha^-(h)}^{\alpha^+(h)} 
      L\circ\frac{\partial\psi}{\partial t}(h,t,v_q)\,dt.$
  \end{enumerate}
\end{definition}

Understanding of exact discretizations can be anchored to the
variational principles. For an exact discretization, curve segment
discretizations lead to a picture of piecewise smooth solutions built
of segments of the exact flow. The difference between an exact
discrete and continuous variational principles is that the discrete
principle allows for piecewise smooth solutions---it is, after all,
constructed from segments. But the well known Weierstrass-Erdman
conditions~\cite{GelfandIM-FominSV-1963-1} imply that additional
critical curves are \emph{not} obtained by allowing corners.  For
example, corners do \emph{not} occur in Riemannian geodesics because
they are triangles for which there would be a locally shorter path
along the hypotenuse than two of the sides.
Lemma~\ref{lem:no-corners} and
Theorem~\ref{thm:exact-discretization-gives-exact-flow} provide the
formal statements and proofs along these variational lines
cf.\ Theorem~1.6.4 of~\cite{MarsdenJE-WestM-2001-1}.

\begin{lemma}\label{lem:no-corners}
Let $(L_h,\psi,\alpha^+,\alpha^-)$ be a $C^k$ exact discretization of
the $C^k$ Lagrangian system $L\colon\!T\kQ\rightarrow\RR$, $k\ge2$,
suppose that the integral curve of $X_E$ through $v\in\!T\kQ$ is
defined for times in $[\alpha^-(h),\alpha^-(h)+2h]$, and set $\tilde
v\=F_{\alpha^+(h)}^{X_E}(v)$.  Then $(h,v,\tilde v)$ is critical.
\end{lemma}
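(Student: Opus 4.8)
The plan is to reduce the discrete criticality condition to the classical first-variation computation for the continuous action along a single integral curve of $X_E$, and then to dispose of the interior contribution via the absence of corners.

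First I would unpack the objective using exactness. Since $X_E$ is a second-order equation, $\!T\tau_\kQ\circ X_E=\mathrm{id}$, so along the integral curve $t\mapsto F^{X_E}_t(v)$ the base curve $t\mapsto\tau_\kQ\bigl(F^{X_E}_t(v)\bigr)=\psi(h,t,v)$ has velocity $\partial\psi/\partial t(h,t,v)=F^{X_E}_t(v)$. Hence, by Definition~\ref{def:exact-discretization}, $L_h(v)=\int_{\alpha^-(h)}^{\alpha^+(h)}L\bigl(F^{X_E}_t(v)\bigr)\,dt$, the genuine Lagrangian action of the flow segment over $[\alpha^-(h),\alpha^+(h)]$, and similarly for $\tilde v$. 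Because $\tilde v$ is obtained by advancing $v$ along the same integral curve of $X_E$, and $\alpha^+(h)-\alpha^-(h)=h$, the segment carried by $\tilde v$ is exactly the continuation of the segment carried by $v$; this gives the join constraint $\partial_h^+(v)=\partial_h^-(\tilde v)$ and shows that $S_h=L_h(v)+L_h(\tilde v)$ is the action of a single smooth integral curve of $X_E$ over $[\alpha^-(h),\alpha^-(h)+2h]$.

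Next I would compute $\!dS_h$ on an admissible variation $(\delta v,\delta\tilde v)$, i.e.\ one satisfying $\!T\partial_h^-(\delta v)=0$, $\!T\partial_h^+(\delta\tilde v)=0$, and $\!T\partial_h^+(\delta v)=\!T\partial_h^-(\delta\tilde v)$. The key is to translate each such variation of the pair $(v,\tilde v)$ into a variation of the concatenated base curve through the flow: $\delta v$ induces the field $w(t)=\!T\tau_\kQ\bigl(\!TF^{X_E}_t(\delta v)\bigr)$ along $v$'s segment, and $\!T\partial_h^-(\delta v)=0$ says exactly that $w$ vanishes at the initial endpoint; the condition $\!T\partial_h^+(\delta\tilde v)=0$ fixes the final endpoint, and the matching condition keeps the two segments joined at the interior node. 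Differentiating the action and integrating by parts then yields the standard first-variation formula: an interior integral term proportional to the Euler-Lagrange operator, boundary terms at the two fixed ends, and a single interior term at the join of the form $\bigl(\FL(\text{node}^-)-\FL(\text{node}^+)\bigr)$ paired against the node variation.

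Finally I would observe that all three contributions vanish. The interior integral term is zero because the base curve is an integral curve of $X_E$, hence solves the Euler-Lagrange equations; the two boundary terms are zero because the admissible variation fixes those endpoints; and the node term is zero because the concatenated curve is in fact smooth---both sides of the node are the same tangent vector $F^{X_E}_{\alpha^+(h)}(v)=F^{X_E}_{\alpha^-(h)}(\tilde v)$, so the fibre derivative $\FL$ (the momentum) is continuous there and its jump vanishes. This is the Weierstrass-Erdman mechanism and it is the crux of the ``no corners'' statement. Thus $\!dS_h=0$ on every admissible variation and the join holds, so $(h,v,\tilde v)$ is critical. I expect the main obstacle to be the second step: carrying out the first-variation computation intrinsically on $\!T\kQ\times\!T\kQ$ rather than on curves, and rigorously matching the abstract variational data $(\delta v,\delta\tilde v)$ and the constraint conditions to the boundary and node terms of the classical formula, so that the momentum-jump term is isolated cleanly---establishing this correspondence, rather than the cancellation itself, is where the work lies.
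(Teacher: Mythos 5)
Your proposal is correct and follows essentially the same route as the paper: the paper likewise applies the classical first-variation formula (its Equation~\eqref{501}) to each of the two flow-segment actions, kills the interior term because $\delta L=0$ along integral curves of $X_E$, kills the outer boundary terms via the admissibility constraints $\!T\partial_h^-(\delta v)=0$ and $\!T\partial_h^+(\delta\tilde v)=0$, and cancels the node term using $\!T\partial_h^+(\delta v)=\!T\partial_h^-(\delta\tilde v)$ together with equality of the momenta $\FL$ at the join, exactly your Weierstrass--Erdman mechanism. The translation of $(\delta v,\delta\tilde v)$ into variation fields along the segments, which you flag as the main obstacle, is handled in the paper simply by reading $\!T\partial_h^{\pm}$ as the endpoint values of those fields, so no additional work beyond your sketch is needed.
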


\begin{proof}
The variational derivative of the action
\[
S\=\int_a^bL\bigl(q^\prime(t)\bigr)\,dt
\]
can be
written~\cite{CuellC-PatrickGW-2008-1,
              MarsdenJE-PatrickGW-ShkollerS-1998-1}
as
\[[501]
  \!dS\bigl(q(t)\bigr)\cdot\delta q(t)=\int_a^b\delta L\left(
  \frac{d^2q}{dt^2}\right)\cdot\delta q\,dt+\left.\FL
  \left(\frac{dq}{dt}\right)\delta q(t)\right|_a^b
\]
where $\delta L$ is locally
\[
\delta L=\left(-\frac d{dt}\frac{\partial L}{\partial\dot q^i}
  +\frac{\partial L}{\partial q^i}\right)d q^i.
\]
Assuming $\delta v$ and $\delta v^\prime$ satisfy the constraints
\[[105]
 \!T\partial_h^-(\delta v)=0,\quad\!T\partial_h^+(\delta\tilde v)=0,\quad
 \!T\partial_h^+(\delta v)=\!T\partial_h^-(\delta\tilde v),
\]
and remembering that $\delta L=0$ along a solution, one applies
Equation~\eqref{501} to each of the integrals in
\[
  S_h(v,\tilde v)=\int_{\alpha^-(h)}^{\alpha^+(h)}L\circ\bigl(
  F_t^{X_E}(v)\bigr)^\prime\,dt+
  \int_{\alpha^-(h)}^{\alpha^+(h)}L\circ\bigl(
  F_t^{X_E}(\tilde v)\bigr)^\prime\,dt,
\]
obtaining, from Equation~\eqref{105} and
$F_{\alpha^+(h)}^{X_E}(v)=F_{\alpha^-(h)}^{X_E}(\tilde v)$,
\[
  \!dS_h(v,\tilde v)(\delta v,\delta\tilde v)
  &=\FL\bigl(F_{\alpha^+(h)}^{X_E}(v)\bigr)\,\!T\partial_h^+(\delta v)
  -\FL\bigl(F_{\alpha^-(h)}^{X_E}(\tilde v)\bigr)\,\!T\partial_h^-(\delta\tilde v)\\
  &=0.\qedhere
\]
\end{proof}

\begin{theorem}\label{thm:exact-discretization-gives-exact-flow}
Let $(L_h,\psi,\alpha^+,\alpha^-)$ be an exact discretization of a
regular Lagrangian system $L\colon\!T\kQ\rightarrow\RR$.  Then
there is a neighborhood $U\subseteq\RR\times\!T\kQ\times\!T\kQ$ of
$\sset{0}\times\triangle(\!T\kQ\times\!T\kQ)$ such that, for all
$(h,v,\tilde v)\in U$ with $h>0$, $\tilde v=F^{X_E}_h(v)$ if and only if
$(h,v,\tilde v)$ is critical.
\end{theorem}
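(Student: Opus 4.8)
The plan is to prove the biconditional by combining Lemma~\ref{lem:no-corners} (which gives one direction, locally, via the Weierstrass--Erdman argument) with the uniqueness of critical points established in Theorem~\ref{thm:discrete-existence-uniqueness}. The forward direction is essentially Lemma~\ref{lem:no-corners}: if $\tilde v=F^{X_E}_h(v)$, then setting $\tilde v=F^{X_E}_{\alpha^+(h)}(v)$ in the lemma shows $(h,v,\tilde v)$ is critical, once one checks that the exact discretization's gluing condition $\partial_h^+(v)=\partial_h^-(\tilde v)$ is indeed satisfied. This last point follows from the definition of $\psi$ in Definition~\ref{def:exact-discretization}: both boundary maps are defined via the base projection of the $X_E$-flow, so $\partial_h^+(v)=\tau_\kQ(F^{X_E}_{\alpha^+(h)}(v))$ and $\partial_h^-(\tilde v)=\tau_\kQ(F^{X_E}_{\alpha^-(h)}(\tilde v))=\tau_\kQ(F^{X_E}_{\alpha^-(h)}(F^{X_E}_{\alpha^+(h)}(v)))$, and since $\alpha^+(h)+\alpha^-(h)$ need not vanish I must instead use that $\tilde v=F^{X_E}_{\alpha^+(h)-\alpha^-(h)}(v)=F^{X_E}_h(v)$ arranges the two base points to coincide.

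The reverse direction is where the uniqueness from Theorem~\ref{thm:discrete-existence-uniqueness} does the real work. First I would observe that an exact discretization of a regular Lagrangian is in particular a $C^k$ discretization (with $k\ge2$) satisfying the hypotheses of that theorem, so there exist neighborhoods $W$ and $U$ on which the critical point $\tilde v$ is \emph{unique} for each $(h,v)$ with $h>0$. The forward direction just established shows that $\tilde v_{\mathrm{exact}}\=F^{X_E}_h(v)$ \emph{is} a critical point lying in $U$ (after possibly shrinking $U$ so that the graph of $h\mapsto F^{X_E}_h$ stays inside it near the diagonal). Uniqueness then forces any critical $\tilde v$ in $U$ to equal $F^{X_E}_h(v)$, which is exactly the reverse implication.

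The main obstacle is geometric bookkeeping about the neighborhood $U$: I must choose a single $U$ on which both (a)~the exact flow $\tilde v=F^{X_E}_h(v)$ lands inside $U$ for $(h,v)$ near $\sset0\times\!T\kQ$, and (b)~the uniqueness clause of Theorem~\ref{thm:discrete-existence-uniqueness} holds. Since the flow $F^{X_E}_h(v)\to v$ as $h\to0$, the graph of the exact time-advance map limits onto the diagonal $\triangle(\!T\kQ\times\!T\kQ)$, so for $h$ small the exact solution is automatically within any prescribed neighborhood of the diagonal; intersecting the neighborhood from the theorem with the domain of definition of the flow segment yields the desired $U$. The one subtlety to handle carefully is that Theorem~\ref{thm:discrete-existence-uniqueness} guarantees existence and uniqueness only for $(h,v)\in W$ with $h>0$, matching the restriction $h>0$ in the present statement, so no claim at $h=0$ is needed and the limiting behavior is used only to locate the graph inside $U$.

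In summary, the proof is short: exactness plus Lemma~\ref{lem:no-corners} supplies a critical point equal to the exact flow, and Theorem~\ref{thm:discrete-existence-uniqueness} supplies uniqueness, so criticality and agreement with the exact flow coincide on a suitable common neighborhood $U$. No new estimates are required; the entire content is assembling the two prior results and verifying compatibility of their neighborhoods.
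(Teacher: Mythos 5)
Your proposal is correct and follows essentially the same route as the paper: the forward implication comes from Lemma~\ref{lem:no-corners} (with the gluing condition $\partial_h^+(v)=\partial_h^-(\tilde v)$ forcing $\tilde v=F^{X_E}_{\alpha^+(h)-\alpha^-(h)}(v)=F^{X_E}_h(v)$, exactly the point you flag), and the converse from the uniqueness clause of Theorem~\ref{thm:discrete-existence-uniqueness}. The only cosmetic difference is in the neighborhood bookkeeping: the paper shrinks $W$ so that the flow segment exists and $\bigl(h,v,F^{X_E}_h(v)\bigr)\in U$ for all $(h,v)\in W$, rather than shrinking $U$ as you phrase it, but this is the same adjustment you describe.
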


\begin{proof}
Let 
$W\subseteq\RR\times\!T\kQ$ and $U\subseteq\RR\times\!T\kQ\times\!T\kQ$ 
be as in the statement of
Theorem~\ref{thm:discrete-existence-uniqueness}.  Possibly
by shrinking $W$, one can assume that
\begin{enumerate}
\item
for all $(h,v)\in W$, the
integral curve of $X_E$ through $v$ is defined for times in
$[\alpha^-(h),\alpha^-(h)+2h]$; and
\item
$\bigl(h,v,F^{X_E}_h(v)\bigr)\in U$ for all $(h,v)\in W$.
\end{enumerate}
If $(h,v,\tilde v)\in U$ and $\tilde v=F^{X_E}_h(v)$ then $(h,v,\tilde
v)$ is critical by Lemma~\ref{lem:no-corners}. Conversely, if
$(h,v,\tilde v)$ is critical then so is
$\bigl(h,v,F^{X_E}_h(v)\bigr)$, so $\tilde v=F^{X_E}_h(v)$ by the
uniqueness in
Theorem~\ref{thm:discrete-existence-uniqueness}.
\end{proof}

We will avoid coordinates in order computations on manifolds by using
the calculus of residuals developed
in~\cite{CuellC-PatrickGW-2007-1}. Suppose $(\kM,h_\kM)$ and $\kN$ are
manifolds. If $f_i\colon\kM\to\kN$, $i=1,2$, are such that $f_1=f_2$
on $h_\kM^{-1}(0)$, then define $f_2=f_1+O(h_\kM^r)$, $r\ge 1$ if, for
all $m_0\in h_\kM^{-1}(0)$, there is a chart $\nu$ at $n_0\=
f_i(m_0)\in\kN$, and there is a function $(\delta f)_{\nu}$ defined
near~$m_0$, and continuous at $m_0$, such that
\[
  \nu\bigl(f_2(m)\bigr)-\nu\bigl(f_1(m)\bigr)=h_\kM(m)^r(\delta f)_{\nu}(m),
\]
for all $m$ in some neighborhood of $m_0$. As is easily shown,
$(\delta f)_{\nu}(m_0)$ transforms as a tangent vector as $\nu$ is
varied, and therefore defines an element $\res^r(f_2,f_1)(m_0)\in
\!T_{f(m_0)}\kN$, called the \defemph{residual}.
Proposition~\ref{prp:g-comp-f-order}, which is a specialization of
Proposition~3 of~\cite{CuellC-PatrickGW-2007-1} is the key result
used to compute residuals without the invocation of local charts.

\begin{proposition}\label{prp:g-comp-f-order}
Let $(\kM,h_\kM)$, $(\kN,h_\kN)$, and $\kP$ be manifolds, and suppose
$f_i\colon \kM\to\kN$ and $g_i\colon\kN\to\kP$, $i=1,2$ are $C^1$ and
satisfy $h_\kN\circ f_i=h_\kM$, $f_2=f_1+O(h_\kM^r)$, and
$g_2=g_1+O(h_\kN^r)$.  Then $g_2\circ f_2=g_1\circ f_1+O(h_\kM^r)$.
Moreover, if~$h_\kM(m)=0$ and $n\equiv f_i(m)$, then
\[
\res^r(g_2\circ f_2, g_1\circ f_1)(m)=\res^r(g_2,g_1)(n)
  +\!T_ng_1\,\res^r(f_2,f_1)(m).
\]
\end{proposition}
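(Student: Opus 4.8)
The plan is to prove this residual composition rule by working in local charts, since the residual is defined chart-wise and Proposition~3 of~\cite{CuellC-PatrickGW-2007-1} is quoted as the source. First I would fix a point $m$ with $h_\kM(m)=0$ and set $n\=f_1(m)=f_2(m)$ (the equality holds because $f_2=f_1+O(h_\kM^r)$ forces agreement on $h_\kM^{-1}(0)$) and $p\=g_1(n)=g_2(n)$. Choosing charts $\mu$ at $m$, $\nu$ at $n$, and $\lambda$ at $p$, the hypotheses give, near $m$,
\[
  \nu\bigl(f_2(m)\bigr)-\nu\bigl(f_1(m)\bigr)=h_\kM(m)^r(\delta f)_\nu(m),
\]
and, near $n$,
\[
  \lambda\bigl(g_2(n)\bigr)-\lambda\bigl(g_1(n)\bigr)=h_\kN(n)^r(\delta g)_\lambda(n),
\]
with $(\delta f)_\nu$ and $(\delta g)_\lambda$ continuous at $m$ and $n$ respectively. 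The condition $h_\kN\circ f_i=h_\kM$ is what lets me substitute $n=f_i(m)$ and replace $h_\kN(f_i(m))^r$ by $h_\kM(m)^r$, so that both pieces carry the same $h_\kM(m)^r$ factor and can be combined.

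Next I would expand $\lambda\bigl(g_2(f_2(m))\bigr)-\lambda\bigl(g_1(f_1(m))\bigr)$ by inserting the intermediate term $\lambda\bigl(g_2(f_1(m))\bigr)$, splitting the difference into
\[
  \Bigl[\lambda\bigl(g_2(f_2(m))\bigr)-\lambda\bigl(g_2(f_1(m))\bigr)\Bigr]
  +\Bigl[\lambda\bigl(g_2(f_1(m))\bigr)-\lambda\bigl(g_1(f_1(m))\bigr)\Bigr].
\]
The second bracket is exactly $(\delta g)_\lambda$ evaluated along $f_1$, producing the term $h_\kM(m)^r(\delta g)_\lambda(f_1(m))$, whose residual at $m$ reads off as $\res^r(g_2,g_1)(n)$ after identifying the chart-vector with a tangent vector. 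For the first bracket I would apply a first-order Taylor expansion of the fixed map $g_2$ (read through $\nu$ and $\lambda$) about the point $f_1(m)$: since $f_2(m)-f_1(m)$ is $O(h_\kM^r)$ in the chart $\nu$, the leading contribution is the chart Jacobian of $g_2$ applied to $(\delta f)_\nu(m)$, times $h_\kM(m)^r$, with a remainder that is $o(h_\kM^r)$ uniformly near $m$. At the base point the Jacobian of $g_2$ equals that of $g_1$ (they agree on $h_\kM^{-1}(0)$), so this contributes $\!T_ng_1\,\res^r(f_2,f_1)(m)$.

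Combining the two brackets gives $g_2\circ f_2=g_1\circ f_1+O(h_\kM^r)$ together with the stated additivity formula for the residual. The main obstacle I expect is controlling the remainder in the Taylor expansion of the first bracket: I must check that the $o(h_\kM^r)$ error term is genuinely negligible at the level of residuals and does not corrupt the continuous-at-$m$ representative, and that the value $(\delta f)_\nu(m)$ feeds into $g_2$'s derivative rather than some derivative of the perturbation itself. This requires using the $C^1$ hypothesis on $g_i$ and $f_i$ to guarantee the Jacobians are continuous, so that evaluating them at $f_1(m)$ versus along a nearby path introduces only higher-order discrepancies. The remaining verification — that the assembled $(\delta(g\circ f))_\lambda(m)$ transforms correctly as a tangent vector under change of charts, so the residual is well defined and chart-independent — is routine, following the same transformation argument already invoked in the definition of the residual, and matches the invariant expression $\res^r(g_2,g_1)(n)+\!T_ng_1\,\res^r(f_2,f_1)(m)$.
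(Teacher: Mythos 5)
Your overall architecture is sound: the splitting of $\lambda\bigl(g_2(f_2(m))\bigr)-\lambda\bigl(g_1(f_1(m))\bigr)$ through the intermediate point $g_2(f_1(m))$, the use of $h_\kN\circ f_i=h_\kM$ to convert $h_\kN(f_i(m))^r$ into $h_\kM(m)^r$, and your handling of the Taylor remainder in the first bracket (best packaged via the integral form $G(y_2)-G(y_1)=\bigl(\int_0^1 DG(y_1+t(y_2-y_1))\,dt\bigr)(y_2-y_1)$, which exhibits the bracket as $h_\kM(m)^r$ times a factor continuous at $m$ with no separate $o(h_\kM^r)$ estimate needed) are all correct, and the $C^1$ hypothesis is used exactly where you say. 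The genuine gap is the sentence ``at the base point the Jacobian of $g_2$ equals that of $g_1$ (they agree on $h_\kM^{-1}(0)$).'' First, the agreement set is $h_\kN^{-1}(0)$, not $h_\kM^{-1}(0)$; more importantly, agreement of $g_1$ and $g_2$ on that set forces their derivatives to agree only on vectors \emph{tangent} to it, since under the paper's convention $h_\kN$ is a submersion and $h_\kN^{-1}(0)$ is a codimension-one submanifold, not a neighborhood. For $r=1$ (which the definition permits) a directional-derivative computation gives $\!T_ng_2\,v-\!T_ng_1\,v=\bigl(dh_\kN(n)v\bigr)(\delta g)_\lambda(n)$, nonzero for $v$ transverse to the fiber: take $\kN=\RR^2$, $h_\kN(x,y)=x$, $g_1\equiv0$, $g_2(x,y)=x$. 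So your first bracket honestly produces $\!T_ng_2\,\res^r(f_2,f_1)(m)$, and the passage to $\!T_ng_1$, as the statement requires, needs an argument you have not supplied.

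The repair uses the hypothesis $h_\kN\circ f_i=h_\kM$ a second time, in a structural way your proof misses. Choose the chart $\nu$ at $n$ adapted so that $h_\kN$ is one of its coordinates (possible since $h_\kN$ is a submersion); composing the defining identity for $f_2=f_1+O(h_\kM^r)$ with that coordinate gives $h_\kN\bigl(f_2(m')\bigr)-h_\kN\bigl(f_1(m')\bigr)=h_\kM(m')-h_\kM(m')=0$, so the $h_\kN$-component of $(\delta f)_\nu$ vanishes identically, and hence $dh_\kN(n)\,\res^r(f_2,f_1)(m)=0$, i.e., the residual is tangent to $h_\kN^{-1}(0)$. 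Since $g_1=g_2$ on that submanifold, $\!T_ng_1$ and $\!T_ng_2$ coincide on its tangent space, in particular on $\res^r(f_2,f_1)(m)$, and with this inserted your two brackets assemble into exactly the stated formula. (Note that the paper itself offers no proof to compare against --- it quotes the result as a specialization of Proposition~3 of~\cite{CuellC-PatrickGW-2007-1} --- so the review above is of your argument on its merits; the same tangency-of-residuals phenomenon is what Proposition~\ref{prp:vb-div-by-zero-contact-order} records in the vector-bundle setting, where residuals are asserted to be vertical.)
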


As has been stated, the central issue is a decrease in the order of
accuracy, essentially due to a division by
$h$. Proposition~\ref{prp:g-comp-f-order}, which is yet another result
of~\cite{CuellC-PatrickGW-2007-1}, tracks this in the context of
Proposition~\ref{prp:vb-div-by-zero}.

\begin{proposition}\label{prp:vb-div-by-zero-contact-order}
Let $(\kM,h_\kM)$ and $\kN$ be a manifolds, let $\pi\colon
E\rightarrow\kN$ a vector bundle, and suppose $f_i$ and $\hat f_i$ are
as in Proposition~\ref{prp:vb-div-by-zero}, with $k\ge r$. Then $\hat
f_2=\hat f_1+O(h_M^{r-1})$ if $f_2=f_1+O(h_M^r)$, $r\ge2$.  Moreover,
$\res^r(f_2,f_1)$ takes values in the vertical bundle of $E$ and
$\res^{r-1}(\hat f_2,\hat f_1)=\res^r(f_2,f_1)$.
\end{proposition}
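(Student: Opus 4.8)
The plan is to localise to a vector-bundle chart for $E$ and reduce the invariant claim to the elementary fact that dividing a quantity vanishing to order $h_\kM^{\,r}$ by $h_\kM$ lowers its order of contact by exactly one. Fix $m_0\in h_\kM^{-1}(0)$; since $f_1=f_2$ on $h_\kM^{-1}(0)$ the point $n_0\=\pi\bigl(f_i(m_0)\bigr)$ is unambiguous, and I choose a vector-bundle chart trivialising $E$ over a neighbourhood of $n_0$. There each $f_i$ splits as $f_i=(g_i,\eta_i)$, with $g_i\=\pi\circ f_i$ the base part and $\eta_i$ the fibre part, and the standing hypothesis $f_i(m)\in0(E)$ for $h_\kM(m)=0$ reads $\eta_i=0$ on $h_\kM^{-1}(0)$. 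The scalar division of Proposition~\ref{prp:vb-div-by-zero} is performed inside the fibres, so $\hat f_i=(g_i,\hat\eta_i)$ with $\hat\eta_i=\eta_i/h_\kM$; in particular $\pi\circ\hat f_i=\pi\circ f_i=g_i$, i.e.\ the blow-up fixes base points and moves only in the vertical directions.

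Before anything else I would verify that $\res^{r-1}(\hat f_2,\hat f_1)$ is defined, that is, that $\hat f_1=\hat f_2$ on $h_\kM^{-1}(0)$. Because $f_2=f_1+O(h_\kM^{\,r})$ with $r\ge2$, the difference $f_2-f_1$ vanishes to second order along $h_\kM^{-1}(0)$, so the vertical $1$-jets of $f_1$ and $f_2$ coincide there; the defining relation $\vrt\!T_mf_i(v_m)=\bigl(dh_\kM(m)v_m\bigr)e_i(m)$ of Proposition~\ref{prp:vb-div-by-zero} then forces $e_1=e_2$, hence $\hat f_1=\hat f_2$, on $h_\kM^{-1}(0)$. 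This is the only place the hypothesis $r\ge2$ is used.

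The substantive step is the vertical order-drop. Write $\eta_2-\eta_1=h_\kM^{\,r}(\delta\eta)$ with $(\delta\eta)$ continuous at $m_0$ and $(\delta\eta)(m_0)=\vrt\res^r(f_2,f_1)(m_0)\in E_{n_0}$. Dividing fibrewise gives $\hat\eta_2-\hat\eta_1=(\eta_2-\eta_1)/h_\kM=h_\kM^{\,r-1}(\delta\eta)$, so $\hat f_2=\hat f_1+O(h_\kM^{\,r-1})$ and $\res^{r-1}(\hat\eta_2,\hat\eta_1)=\vrt\res^r(f_2,f_1)$; this is the invariant L'Hospital already packaged in Propositions~\ref{prp:vb-div-by-zero} and~\ref{prp:g-comp-f-order}, and it is exactly what produces the promised loss of one order. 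For the horizontal direction, the residual chain rule (the composition case of Proposition~\ref{prp:g-comp-f-order}, applied to the fixed submersion $\pi$) gives $\hor\res^r(f_2,f_1)=\!T\pi\,\res^r(f_2,f_1)=\res^r(\pi\circ f_2,\pi\circ f_1)$; since the blow-up leaves $\pi\circ f_i$ untouched and $\pi\circ f_2-\pi\circ f_1=O(h_\kM^{\,r})=o(h_\kM^{\,r-1})$, the base part makes no contribution to $\res^{r-1}(\hat f_2,\hat f_1)$. Thus $\res^{r-1}(\hat f_2,\hat f_1)$ is already vertical and equals $\vrt\res^r(f_2,f_1)$, and the asserted identity $\res^{r-1}(\hat f_2,\hat f_1)=\res^r(f_2,f_1)$ reduces to the verticality of $\res^r(f_2,f_1)$, i.e.\ to the vanishing of the base residual $\res^r(\pi\circ f_2,\pi\circ f_1)$.

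The main obstacle is precisely this verticality, and I expect it to be the delicate point rather than the order count: the bare contact relation does not by itself pin the base projections together beyond order $r$, and the two assertions of the Proposition are in fact equivalent to $\res^r(\pi\circ f_2,\pi\circ f_1)=0$. The fact to isolate is that the singular division rescales only fibres, so the horizontal residual is untouched by the blow-up and must be supplied by the geometry of the maps being compared; in the situations to which this Proposition is applied the relevant $f_i$ agree in their base behaviour to the required order, which is what makes $\res^r(f_2,f_1)$ vertical. The remaining work is coordinate-free bookkeeping: the vertical spaces at the off-section point $\hat f(m_0)$ and at the on-section point $f(m_0)$ must be compared through the canonical identifications with $E_{n_0}$, after which the identity $\res^{r-1}(\hat f_2,\hat f_1)=\res^r(f_2,f_1)$ is immediate from the fibrewise division above.
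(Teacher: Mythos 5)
You should first note a point of reference: the paper contains no internal proof of Proposition~\ref{prp:vb-div-by-zero-contact-order}; it is imported from~\cite{CuellC-PatrickGW-2007-1}, so your proposal can only be judged on its own merits and against the way the proposition is actually used in the proof of Theorem~\ref{thm:discrete-contact-order}. On those terms your argument is essentially right, and your concluding caveat is not a defect of your write-up but a genuine imprecision in the statement as printed. Your localization correctly establishes everything that follows from the stated hypotheses: that $e_1=e_2$ on $h_\kM^{-1}(0)$ (the one place $r\ge2$ enters, exactly as you say), that $\hat f_2=\hat f_1+O(h_\kM^{r-1})$, and that $\res^{r-1}(\hat f_2,\hat f_1)$ is automatically vertical and equals $\vrt\res^r(f_2,f_1)$ under the canonical identification of vertical vectors with fiber elements. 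One cosmetic remark: Proposition~\ref{prp:g-comp-f-order} as stated requires a submersion $h_\kN$ on the intermediate manifold with $h_\kN\circ f_i=h_\kM$, which is not literally available on the total space $E$; but the identity $\!T\pi\,\res^r(f_2,f_1)=\res^r(\pi\circ f_2,\pi\circ f_1)$ that you want is immediate from the definition of the residual in a bundle chart, which your localization already supplies, so nothing is lost.

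Your suspicion about the ``moreover'' clause is correct and can be sharpened to a counterexample: take $\kM=\RR$ with $h_\kM$ the identity, let $E=\RR\times\RR$ be the trivial line bundle over $\kN=\RR$, and set $f_1(h)=(0,0)$ and $f_2(h)=(h^r,0)$. Both maps take values in the zero section, so the hypotheses of Proposition~\ref{prp:vb-div-by-zero} hold; $f_2=f_1+O(h_\kM^r)$ with $\res^r(f_2,f_1)=(1,0)$, which is horizontal, while $\hat f_i=f_i$ gives $\res^{r-1}(\hat f_2,\hat f_1)=0=\vrt\res^r(f_2,f_1)\ne\res^r(f_2,f_1)$. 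Thus the verticality of $\res^r(f_2,f_1)$, and with it the clean equality $\res^{r-1}(\hat f_2,\hat f_1)=\res^r(f_2,f_1)$, requires precisely the extra hypothesis you isolate, namely $\res^r(\pi\circ f_2,\pi\circ f_1)=0$ (for instance $\pi\circ f_2=\pi\circ f_1+O(h_\kM^{r+1})$); your observation that the proposition's two assertions are jointly equivalent to this vanishing is accurate. Reassuringly, the paper's downstream use is consistent with your corrected version: in the proof of Theorem~\ref{thm:discrete-contact-order} every invocation passes through vertical parts (the displayed computation compares $\vrt\res^r$ of the blown-up maps with $\vrt\res^{r+1}$ of the unblown ones), and the final step uses only that the residual of the \emph{blown-up} comparison is ``vertical anyway''---which is the unconditional half of your argument. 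So your proof, with the conclusion weakened to $\res^{r-1}(\hat f_2,\hat f_1)=\vrt\res^r(f_2,f_1)$, supports everything the paper needs; what it does not (and cannot) recover is the stronger clause as literally stated.
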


We come to the main objective, accuracy, which is the order to which
an evolution map of a given discretization of a Lagrangian system
agrees with the continuous flow of that Lagrangian system.  This can
be approached by analyzing the order that two discretizations agree,
since, by Theorem~\ref{thm:exact-discretization-gives-exact-flow}, the
continuous flow is obtained from the exact discretizations.  Together
with the equivalences of Section~\ref{sec:Q-times-Q}, this suffices to
repair the proof of (3)~implies~(1) in Theorem~2.3.1
of~\cite{MarsdenJE-WestM-2001-1}

\begin{definition}\label{def:order-r-discretizations}
  Two discretizations $(L_h^i,\psi^i,\alpha^+,\alpha^-)$, $i=1,2$
  have \emph{order $r$ contact} if
  $\psi^2(h,t,v)=\psi^1(h,t,v)+O(t^{r+1})$ and
  $L_h^2(v)=L_h^1(v)+O(h^{r+1})$.
\end{definition}

\begin{theorem}\label{thm:discrete-contact-order}
  Let $F^i_h$ be two discrete evolution maps of two discretizations
  $(L_h^i,\psi^i,\alpha^+,\alpha^-)$, $i=1,2$ of a regular
  Lagrangian system $L\colon \!T\kQ\rightarrow\RR$. Then
  $F^2_h(v)=F^1_h(v)+O(h^{r+1})$ if $(L_h^i,\psi^i,\alpha^+,\alpha^-)$
  have order~$r$ contact.
\end{theorem}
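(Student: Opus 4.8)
The plan is to push the order-$r$ contact through the blow-up of Theorem~\ref{thm:discrete-existence-uniqueness}, observe that the division by~$h$ costs exactly one order, and then recover that order from the past--future symmetry of Remark~\ref{rem:blown-up-principle}. First I would blow up both discretizations as in the proof of Theorem~\ref{thm:discrete-existence-uniqueness}, producing for each~$i$ the $C^{k-1}$ objects $\hat L_i$, the constraint map $\hat\varphi_i$ on the constraint manifold $\kC_i=\set{(h,v,\tilde v)}{\partial_h^{+,i}(v)=\partial_h^{-,i}(\tilde v)}$, and the critical-point map $\hat\gamma_i$, so that $F^i_h=\pi_{23}\circ\hat\gamma_i$ is extracted as a graph over the first $\!T\kQ$ factor (Proposition~5 of~\cite{CuellC-PatrickGW-2007-1}).

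Next I would descend the contact hypotheses to the blown-up data. From $L_h^2=L_h^1+O(h^{r+1})$ and $\hat L_i=\tfrac1hL_h^i$, Proposition~\ref{prp:vb-div-by-zero-contact-order} gives $\hat L_2=\hat L_1+O(h^r)$; from $\psi^2=\psi^1+O(t^{r+1})$ with $\alpha^\pm(h)=O(h)$ the boundary maps obey $\Delta\partial^\pm\=\partial_h^{\pm,2}-\partial_h^{\pm,1}=O(h^{r+1})$, so after the $1/h$ in $\hat\varphi$ one gets $\hat\varphi_2=\hat\varphi_1+O(h^r)$. Persistence of nondegenerate critical points (Theorem~2 of~\cite{CuellC-PatrickGW-2007-1}) together with the residual chain rule Proposition~\ref{prp:g-comp-f-order} then gives the naive estimate $\hat\gamma_2=\hat\gamma_1+O(h^r)$, hence $F^2=F^1+O(h^r)$. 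The entire difficulty is to improve this by one order.

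The improvement uses the involution $\sigma\colon(v,\tilde v)\mapsto(\tilde v,v)$ in two ways. At $h=0$ the common critical manifold is the diagonal (the $\sigma$-fixed set) and the blown-up principle is $\sigma$-invariant with nondegenerate constrained Hessian, so that Hessian commutes with~$\sigma$; consequently the $\sigma$-antisymmetric part of the displacement of critical points responds only to the $\sigma$-antisymmetric part of the perturbation. Moreover $F$ sees only that antisymmetric part: a displacement $(\xi,\eta)$ of the critical point changes the graph function by $\eta-\xi$, so a purely symmetric motion $(\xi,\xi)$ merely slides the critical point along the diagonal and leaves $F$ unchanged. Hence $\res^r(F^2,F^1)$ is governed entirely by the $\sigma$-antisymmetric part of $\res^r(\hat\gamma_2,\hat\gamma_1)$.

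It remains to show that this antisymmetric part vanishes at order~$r$; this is the main obstacle and the coordinate-free generalization of the example identity $\hat S(v,\tilde v)-\hat S(\tilde v,v)=O(h^{r+1})$ on the diagonal. The objective perturbation enters as the symmetric sum $(\hat L_2-\hat L_1)(v)+(\hat L_2-\hat L_1)(\tilde v)$, so it is exactly $\sigma$-symmetric and contributes nothing. The constraint perturbation carries the only asymmetry, and its $\sigma$-antisymmetric part is, up to the $1/h$, proportional to $(\Delta\partial^++\Delta\partial^-)(\tilde v)-(\Delta\partial^++\Delta\partial^-)(v)$, which vanishes identically on the diagonal $v=\tilde v$; evaluated along the true critical points, which lie $O(h)$ off the diagonal since $F^1_h=\mathrm{id}+O(h)$, it is therefore $\tfrac1h\,O(h^{r+1})\,O(h)=O(h^{r+1})$. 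Thus the antisymmetric perturbation is $O(h^{r+1})$, the antisymmetric displacement of the critical points is $O(h^{r+1})$, and so $\res^r(F^2,F^1)=0$, i.e.\ $F^2_h(v)=F^1_h(v)+O(h^{r+1})$. The delicate point throughout is to keep the $\sigma$-grading compatible with both the residual calculus and the implicit-function extraction of the graph, so that ``symmetric motions do not move the graph'' becomes an identity at the exact order~$r$ rather than a heuristic.
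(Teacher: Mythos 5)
Your overall architecture is the paper's: blow up both principles as in Theorem~\ref{thm:discrete-existence-uniqueness}, accept the loss of one order to the division by $h$, and recover it from the $\ZZ_2$ symmetry via the principle that a $\sigma$-symmetric residual of a graph does not perturb the induced map at that order (Proposition~6 of~\cite{CuellC-PatrickGW-2007-1}, which the paper invokes exactly as you do); your handling of the objective part is also essentially the paper's, since $\hat S^i(h,v,\tilde v)=\hat L^i(h,v)+\hat L^i(h,\tilde v)$ is intrinsically $\sigma$-invariant. The gap is in the decisive last step, and it is twofold. First, your direct comparison $\hat\varphi_2=\hat\varphi_1+O(h^r)$, and hence your formula for the $\sigma$-antisymmetric part of the constraint perturbation, is not well defined: the maps $\hat\varphi^i$ have different domains, namely the constraint manifolds $\kC^1\neq\kC^2$ cut out by the different boundary maps $\partial_h^{1\pm}$, $\partial_h^{2\pm}$. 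The paper devotes the core of its proof to repairing exactly this, introducing normalizing diffeomorphisms $\Theta^i$ (parallel transport of $v$ and $\tilde v$ to the segment endpoints) with $\Theta^2=\Theta^1+O(h^{r+1})$, which carry both problems onto the common space $\RR\times(\!T\kQ\oplus\!T\kQ)$. The residuals $\res^{r+1}(\Theta^2,\Theta^1)$ contribute to the normalized constraint residual at exactly the order in question, through the identity $\pi_{23}\circ\bar\varphi^i=\frac1h\,\zeta^{-1}\circ(\tau_\kQ\circ\pi_2,\tau_\kQ\circ\pi_3)\circ\Theta^i\circ(\!1,\sigma)\circ(\Theta^i)^{-1}$, and when they are kept the computation produces a combination of $\res^{r+1}(\Theta^2,\Theta^1)$ at $(0,v,\tilde v)$ and at $(0,\tilde v,v)$ that is invariant under $v\leftrightarrow\tilde v$ \emph{as a function}: the antisymmetric part of the honest residual is identically zero, not merely zero on the diagonal as your uncorrected expression $(\Delta\partial^++\Delta\partial^-)(\tilde v)-(\Delta\partial^++\Delta\partial^-)(v)$ suggests.

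Second, even granting your formula, the step ``the antisymmetric part vanishes on the diagonal, hence is $O(h^{r+1})$ when evaluated along the true critical points, hence negligible'' controls the wrong quantity. The first-order displacement of a constrained critical point under $\varphi\mapsto\varphi+\delta\varphi$ solves the linearized Lagrange-multiplier system, which contains the term $\lambda\cdot\!D(\delta\varphi)$ in addition to the value of $\delta\varphi$; at $h=0$ the multiplier of the blown-up problem is $2\FL(z_q)$, generically nonzero. An antisymmetric constraint perturbation of size $O(h^r)$ that vanishes on the diagonal still has an $O(h^r)$ derivative transverse to the diagonal (vanishing on the fixed-point set kills only the tangential derivatives), so through the multiplier term it can displace the critical point antisymmetrically at order $h^r$ --- exactly the order you must exclude --- even though its \emph{values} along the solution family are $O(h^{r+1})$. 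This is why the paper's condition~\eqref{500} demands symmetry of $\res^r(\bar S^2,\bar S^1)$ and $\res^r(\bar\varphi^2,\bar\varphi^1)$ as functions near the diagonal, not merely along the solutions, and why the $\Theta$-normalized residual computation establishing that symmetry is the real content of the proof rather than a removable technicality. Your heuristic ``symmetric motions do not move the graph'' is sound and is precisely Proposition~6 of~\cite{CuellC-PatrickGW-2007-1}; what is missing is the justified, function-level symmetry of the constraint data that feeds it.
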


\begin{proof}
Assume the context and notations of the proof
of Theorem~\ref{thm:discrete-existence-uniqueness}: in summary,
\[
  \hat L^i(h,v_q)=\begin{cases}\displaystyle\frac 1h L^i_h(v_q),&h\ne 0,\\[5pt]
  L(v_q),&h=0,\end{cases}
\]
are $C^{k-1}$, $U^i\subseteq\RR\times\!T\kQ\times\!T\kQ$ is open, and
\[
  \kC^i=\set{(h,v,\tilde v)\in U^i}{\partial_h^{i+}(v)=\partial_h^{i-}(\tilde v)}
\]
are submanifolds of $\RR\times\!T\kQ\times\!T\kQ$.  Also, $E$ and
$\zeta$ are defined, and $\hat\varphi^i\colon\kC^i\rightarrow\RR\times E$ by
\[
\hat\varphi^i(h,v,\tilde v)=\begin{cases}\displaystyle
\left(h,\frac 1h\zeta^{-1}
  \bigl(\partial_h^{i+}(\tilde v),\partial_h^{i-}(v)\bigr)\right),&h\ne0,\\[12pt]
\displaystyle\left(h,\frac12(v+\tilde v,-v-\tilde v)\right),&h=0.
\end{cases}\]
$\hat\gamma^i\colon V^i\rightarrow U^i$, where $V^i\subseteq\RR\times
E$ is open, and $\hat\gamma^i(h,z_q,-z_q)$ is the unique critical point in
$\kC^i$ of $\hat L|\hat\varphi^{-1}(h,z_q,-z_q)$. The maps $F^i$ are
constructed as the graphs of $\pi_{23}\circ\hat\gamma^i$ where
$\pi_{23}=(\pi_2,\pi_3)$ and $\pi_2,\pi_3\colon\RR\times\!T\kQ\times
\!T\kQ\rightarrow\!T\kQ$ are the projections.  Proposition~6
of~\cite{CuellC-PatrickGW-2007-1} implies maps agree to the same order
as their graphs, and one order higher if their residuals are
symmetric.  So it suffices to show that
$\pi_{23}\circ\hat\gamma^2=\pi_{23}\circ\hat\gamma^1+O(h^r)$ and that
$\res^r(\pi_{23}\circ\hat\gamma^2,\pi_{23}\circ\hat\gamma^1)$ is
symmetric.

To establish the contact order of $\pi_{23}\circ\hat\gamma^i$, the
basic data of the corresponding critical point problems has to be
compared. For that it is inconvenient that the manifolds $\kC^i$
depend on $i$. Let $\Theta^i$ be maps from $\RR\times\!T\kQ\times\!T\kQ$
to itself which have the following properties:
\begin{enumerate}
\item 
$\Theta^2=\Theta^1+O(h^{r+1})$;
\item 
$\Theta^i$, $i=1,2$ are the identity on
$\sset{0}\times\!T\kQ\times\!T\kQ$;
\item 
$\Theta^i$, $i=1,2$ have nonsingular derivatives on 
$\sset{0}\times\!T\kQ\times\!T\kQ$;
\item
$\tau_\kQ\circ\pi_2\circ\Theta^i(h,v,\tilde v)=\partial^{i+}_h(v)$ and
$\tau_\kQ\circ\pi_3\circ\Theta^i(h,v,\tilde v)=\partial^{i-}_h(\tilde v)$.
\end{enumerate}
For example, we can use a metric on $\kQ$ the parallel transport
$\PP_{q_2,q_1}$ along geodesics between nearby points of
$\kQ\times\kQ$ to define
\[
\Theta^i(h,v,\tilde v)\=\bigl(h,
\PP_{\partial^{i+}_h(v),\tau_\kQ(v)}(v),
\PP_{\partial^{i-}_h(\tilde v),\tau_\kQ(\tilde v)}(\tilde v)\bigr).
\]
The purpose of the maps $\Theta^i$ is to normalize the submanifolds
$\kC^i$.  In particular, by transporting the vectors defining the
curve segments to common connection points, each $\Theta^i$ maps
$\kC^i$ diffeomorphically to an open submanifold of
$\RR\times\!T\kQ\oplus \!T\kQ$ where
\[
 \!T\kQ\oplus\!T\kQ=\set{(v,\tilde v)\in\!T\kQ\times\!T\kQ}{\tau_\kQ(v)
  =\tau_{\kQ}(\tilde v)}
\]
is the Whitney direct sum.

Since $\Theta^i(0,v,\tilde v)=(0,v,\tilde v)$ and
$\Theta^i$ is a local diffeomorphism on $\sset{0}\times\!T\kQ\times\!T\kQ$,
Theorem~1 of~\cite{CuellC-PatrickGW-2007-1} implies that $\Theta^i$
can be assumed to be a diffeomorphism between neighborhoods $\bar
U^i,\hat U^i\supseteq\sset{0}\times\!T\kQ\times\!T\kQ$.  Set
\[
\bar S^i\=\hat S^i\circ(\Theta^i)^{-1},\quad
\bar\varphi^i\=\hat\varphi^i\circ(\Theta^i)^{-1},\quad
\bar\gamma^i\=\Theta^i\circ\hat\gamma^i,
\]
where $\hat S(h,v,\tilde v)=\hat L(h,v)+\hat L(h,\tilde v)$.
$\bar\gamma(h,z_q,-z_q)$ is the unique critical point of $\bar
S^i|\bigl(\RR\times (\!T\kQ\oplus\!T\kQ)\bigr)$ in $\bar U^i$ subject
to the constraint $\bar\varphi=(h,z_q,-z_q)$.  By the consistency of
$L_h^i$, the consistency of $\psi^i$, the blow-up constructions of
Theorem~\ref{thm:discrete-existence-uniqueness}, and by
Proposition~\ref{prp:vb-div-by-zero-contact-order}, $\bar S_2=\bar
S_1+O(h^{r})$ and $\bar\varphi=\bar\varphi+O(h^{r})$, so
$\Theta^2\circ\bar\gamma^2=\Theta^1\circ\bar\gamma^1+O(h^{r})$.  Also,
since $\Theta^2=\Theta^1+O(h^{r+1})$,
$\res^r(\hat\gamma^2,\hat\gamma^1)=
\res^r(\bar\gamma^2,\bar\gamma^1)$.  So it is sufficient to show that
$\res^r(\pi_{23}\circ\bar\gamma^2,\pi_{23}\circ\bar\gamma^1)$ is
symmetric i.e.\ that
\[
\!T\pi_2\res^r(\bar\gamma^2,\bar\gamma^1)=
\!T\pi_3\res^r(\bar\gamma^2,\bar\gamma^1).
\]

From Remark~\ref{rem:blown-up-principle}, setting $h=0$,
$\Theta^i\circ\bar\gamma^i(0,z_q,-z_q)$ is the solution of the
variational problem of finding the critical points of $L(v)+L(\tilde
v)$ with the constraints $v,\tilde v\in\!T_q\kQ$ and $\frac12(v+\tilde
v)=z_q$. This variational problem admits the $\ZZ_2$ symmetry
$(v,\tilde v)\mapsto(\tilde v,v)$ and the solution is
$\Theta^i\circ\hat\gamma^i(0,z_q,-z_q)=(0,z_q,z_q)$ (and therefore all
solutions occur on the fixed point set of the $\ZZ_2$ action).  So it
suffices to show that $\res^{r}(\bar S^2,\bar S^1)$ and
$\res^{r}(\bar\varphi^2,\bar\varphi^1)$ are symmetric i.e.
\[[500]
&\res^{r}(\bar S^2,\bar S^1)(0,v,\tilde v)=
\res^{r}(\bar S^2,\bar S^1)(0,\tilde v,v),\\
&\res^{r}(\bar\varphi^2,\bar\varphi^1)(0,v,\tilde v)
=\res^{r}(\bar\varphi^2,\bar\varphi^1)(0,\tilde v,v).
\]
The first of~\eqref{500} is immediate: $\res^r(\hat S^2,\hat S^1)$
is symmetric since $\hat S^1$ and $\hat S^2$ are, and
\[
  \res^r(\hat S^2,\hat S^1)&=
  \res^r(\bar S^2\circ\Theta^2,\bar S^1\circ\Theta^1)\\
  &=
  \res^r(\bar S^2,\bar S^1)\circ\Theta^1+\!T\bar S^1\res^r(\Theta^2,\Theta^1)\\
  &=\res^r(\bar S^2,\bar S^1)\circ\Theta^1.
\]
The proof of the second of~\eqref{500} begins with the observation
that
\[
(\!1,\tau_\kQ,\tau_\kQ)\circ\Theta^i(h,v,\tilde v)
  =\bigl(h,\partial^{i+}_h(v),\partial^{i-}_h(\tilde v)\bigr),
\]
so that, after defining the involution
\[
  \sigma(v,\tilde v)\=(\tilde v,v),
\]
we have
\[
  \bigl(h,\partial^{i+}_h(\tilde v),\partial^{i-}_h(v)\bigr)
  =(\!1,\tau_\kQ,\tau_\kQ)\circ\Theta^i\circ(\!1,\sigma)(h,v,\tilde v),
\]
and hence (by abuse of notation $\pi_{23}(h,q^+,q^-)=(q^+,q^-)$)
\[
  \pi_{23}\circ\bar\varphi^i
  =\frac1h\zeta^{-1}\circ(\tau_\kQ\circ\pi_2,\tau_\kQ\circ\pi_3)
  \circ\Theta^i\circ(\!1,\sigma)\circ(\Theta^i)^{-1}.
\]
Using Proposition~\ref{prp:vb-div-by-zero-contact-order},
\[
  &\vrt\res^r(\pi_2\circ\bar\varphi^2,\pi_{23}\circ\bar\varphi^1)
  (0,v,\tilde v)\\
  &\qquad=\vrt\res^{r+1}\bigl(
    \zeta^{-1}\circ(\tau_\kQ\circ\pi_2,\tau_\kQ\circ\pi_3)
    \circ\Theta^2\circ(\!1,\sigma)\circ(\Theta^2)^{-1},\\
  &\qquad\qquad\qquad\qquad\qquad
    \zeta^{-1}\circ(\tau_\kQ\circ\pi_2,\tau_\kQ\circ\pi_3)
    \circ\Theta^1\circ(\!1,\sigma)\circ(\Theta^1)^{-1}\bigr)(0,v,\tilde v)\\
  &\qquad
  =\vrt\!T\zeta^{-1}\,\!T(\tau_\kQ\circ\pi_2,\tau_\kQ\circ\pi_3)
  \Bigl(\res^{r+1}(\Theta^2,\Theta^1)(0,\tilde v,v)\\
  &\qquad\qquad\qquad\qquad\mbox{}
  -\!T(\!1,\sigma)\res^{r+1}(\Theta^2,\Theta^1)
  (0,v,\tilde v)\Bigr).
\]
To compute the outer part of this, note that, for $\delta q^+$ and
$\delta q^-$ in the same fiber of $\!T\kQ$,
\[[eq:800]
  \vrt\!T\zeta^{-1}(\delta q^+,\delta q^-)&=
  \frac12(\delta q^+-\delta q^-,-\delta q^++\delta q^-)\\
  &=\frac12(\!1-\sigma)(\delta q^+,\delta q^-),
\]
and since
\[
&\sigma\circ\!T(\tau_\kQ\circ\pi_2,\tau_\kQ\circ\pi_3)
  \bigl(\res^{r+1}(\Theta^2,\Theta^1)(0,\tilde v,v)\\
&\qquad\qquad\qquad\qquad\qquad\qquad\mbox{}
  -\!T(\!1,\sigma)\res^{r+1}(\Theta^2,\Theta^1)
  (0,v,\tilde v)\bigr)\\
&\qquad=\!T(\tau_\kQ\circ\pi_2,\tau_\kQ\circ\pi_3)\!T(\!1,\sigma)
  \bigl(\res^{r+1}(\Theta^2,\Theta^1)(0,\tilde v,v)\\
&\qquad\qquad\qquad\qquad\qquad\qquad\qquad\mbox{}
  -\!T(\!1,\sigma)\res^{r+1}(\Theta^2,\Theta^1)
  (0,v,\tilde v)\bigr)\\
&\qquad=\!T(\tau_\kQ\circ\pi_2,\tau_\kQ\circ\pi_3)
  \bigl(\!T(\!1,\sigma)\res^{r+1}(\Theta^2,\Theta^1)(0,\tilde v,v)\\
&\qquad\qquad\qquad\qquad\qquad\qquad\qquad\mbox{}
  -\res^{r+1}(\Theta^2,\Theta^1)(0,v,\tilde v)\bigr),
\]
it follows from~\eqref{eq:800} that
\[
  &\vrt\res^r(\pi_2\circ\bar\varphi^2,\pi_3\circ\bar\varphi^1)
  (0,v,\tilde v)\\
  &\qquad=\frac12\!T(\tau_\kQ\circ\pi_2,\tau_\kQ\circ\pi_3)\Bigl(\\
  &\qquad\qquad\res^{r+1}(\Theta^2,\Theta^1)(0,v,\tilde v)
  +\res^{r+1}(\Theta^2,\Theta^1)(0,\tilde v,v)\\
  &\qquad\qquad\mbox{}
  -\!T(\!1,\sigma)\res^{r+1}(\Theta^2,\Theta^1)(0,v,\tilde v)
  -\!T(\!1,\sigma)\res^{r+1}(\Theta^2,\Theta^1)(0,\tilde v,v)
  \Bigr),
\]
which is symmetric i.e.
\[
\vrt\res^r(\pi_2\circ\bar\varphi^2,\pi_3\circ\bar\varphi^1)
  (0,v,\tilde v)=
\vrt\res^r(\pi_2\circ\bar\varphi^2,\pi_3\circ\bar\varphi^1)
  (0,\tilde v,v).
\]
This implies that
$\res^r(\pi_2\circ\bar\varphi^2,\pi_3\circ\bar\varphi^1)(0,\tilde v,v)$ is
symmetric, since by Proposition~\ref{prp:vb-div-by-zero-contact-order}
that is vertical anyway, so equality of the vertical parts is
sufficient for equality. Thus,
$\res^{r}(\bar\varphi^2,\bar\varphi^1)=
\bigl(0,\res^r(\pi_2\circ\bar\varphi^2,\pi_3\circ\bar\varphi^1)\bigr)
$
is symmetric, as required.
\end{proof}

\begin{theorem}\label{thm:discrete-order}
Suppose that $F_h$ be an evolution map of an order~$r$ discretization
$(L_h,\psi,\alpha^+,\alpha^-)$ of a regular Lagrangian system
$L\colon\!T\kQ\rightarrow\RR$. Then $F_h(v)=F^{X_E}_h(v)+O(h^{r+1})$.
\end{theorem}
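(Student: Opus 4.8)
The plan is to realize the exact flow $F^{X_E}_h$ as the evolution map of a companion \emph{exact} discretization, and then to obtain the theorem as the specialization of the contact-order comparison in Theorem~\ref{thm:discrete-contact-order} to the case where one of the two discretizations is exact. First I would form the exact discretization $(L_h^{\mathrm{ex}},\psi^{\mathrm{ex}},\alpha^+,\alpha^-)$ of the same regular Lagrangian $L$, reusing the \emph{same} boundary data $\alpha^+,\alpha^-$ as the given discretization, with $\psi^{\mathrm{ex}}(h,t,v_q)\=\tau_\kQ\bigl(F^{X_E}_t(v_q)\bigr)$ and $L_h^{\mathrm{ex}}$ the classical action over $[\alpha^-(h),\alpha^+(h)]$, exactly as prescribed by Definition~\ref{def:exact-discretization}. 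Since $L$ is regular, $X_E$ and hence its flow are as smooth as $L$, so $(L_h^{\mathrm{ex}},\psi^{\mathrm{ex}},\alpha^+,\alpha^-)$ is a bona fide $C^k$ discretization in the sense of Definition~\ref{def:discretization-of-Lagrangian-system}. By Theorem~\ref{thm:discrete-existence-uniqueness} it therefore has a well defined evolution map $F^{\mathrm{ex}}_h$ on a neighborhood of $\sset0\times\!T\kQ$, and by Theorem~\ref{thm:exact-discretization-gives-exact-flow} this evolution map is the flow itself, $F^{\mathrm{ex}}_h(v)=F^{X_E}_h(v)$.

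The second step is simply to read the hypothesis correctly. To say that $(L_h,\psi,\alpha^+,\alpha^-)$ is an \emph{order $r$} discretization is to say that it has order $r$ contact with the exact discretization sharing its boundary data, that is, $\psi(h,t,v)=\psi^{\mathrm{ex}}(h,t,v)+O(t^{r+1})$ and $L_h(v)=L_h^{\mathrm{ex}}(v)+O(h^{r+1})$ in the sense of Definition~\ref{def:order-r-discretizations}. Note that Definition~\ref{def:order-r-discretizations} compares only discretizations carrying a common $\alpha^+,\alpha^-$, which is exactly why the exact companion was built with the same boundary data.

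It then remains only to apply Theorem~\ref{thm:discrete-contact-order} to the pair consisting of the given discretization and its exact companion. These have order $r$ contact, so their evolution maps satisfy $F_h(v)=F^{\mathrm{ex}}_h(v)+O(h^{r+1})$; combining this with $F^{\mathrm{ex}}_h=F^{X_E}_h$ from the first step yields $F_h(v)=F^{X_E}_h(v)+O(h^{r+1})$, as asserted.

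I expect the residual difficulties to be purely those of bookkeeping rather than of substance. The two points needing care are, first, verifying that the exact companion really is an admissible $C^k$ discretization---which reduces to smoothness of the Euler-Lagrange flow and to the second-order property $\!T\tau_\kQ\,X_E(v_q)=v_q$ that guarantees $\partial\psi^{\mathrm{ex}}/\partial t\,(h,0,v_q)=v_q$ as required in Definition~\ref{def:discretization-of-tangent-bundle}---and, second, intersecting the neighborhoods furnished by Theorems~\ref{thm:discrete-existence-uniqueness}, \ref{thm:exact-discretization-gives-exact-flow}, and~\ref{thm:discrete-contact-order} so that $F_h$, $F^{\mathrm{ex}}_h$, and $F^{X_E}_h$ are simultaneously defined on one common neighborhood of $\sset0\times\!T\kQ$, where the $O(h^{r+1})$ estimate is then meaningful. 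All the genuine work, in particular the one-order gain arising from the past--future $\ZZ_2$ symmetry, has already been discharged inside Theorem~\ref{thm:discrete-contact-order}.
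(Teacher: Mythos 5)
Your proposal is correct and is essentially the paper's own proof, which simply says ``Combine Theorems~\ref{thm:exact-discretization-gives-exact-flow} and~\ref{thm:discrete-contact-order}''; you have just spelled out the same two-step combination (exact companion discretization with shared $\alpha^\pm$, then the contact-order comparison) in full detail. The bookkeeping points you flag---admissibility of the exact discretization and intersecting the neighborhoods---are exactly the implicit content the paper leaves to the reader.
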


\begin{proof}
Combine Theorems~\ref{thm:exact-discretization-gives-exact-flow}
and~\ref{thm:discrete-contact-order}.
\end{proof}

%
\section{$\kQ\times\kQ$}\label{sec:Q-times-Q}
%

In this Section we provide the relations between the discrete
mechanics of Definition~\ref{def:discretization-of-Lagrangian-system},
with discrete phase space $\!T\kQ$, and the standard discrete
mechanics, with discrete phase space $\kQ\times\kQ$.  By standard
discrete mechanics we mean:
\begin{enumerate}
  \item 
    The discrete Lagrangian is of the form
    $L^{\kQ\times\kQ}_h\colon\kQ\times\kQ\rightarrow\RR$.
  \item 
    $\bigl(h,(q^+,q^-),(\tilde q^+,\tilde q^-)\bigr)$ is
    \defemph{critical} if~$\bigl((q^+,q^-),(\tilde q^+,\tilde
    q^-)\bigr)$~is a critical point of the variational principle
    \[
       \left\{\begin{array}{l}\displaystyle 
         S^{\kQ\times\kQ}_h\bigl((q^+,q^-),(\tilde q^+,\tilde q^-)\bigr)
         \equiv L^{\kQ\times\kQ}_h(q^+,q^-)
         +L^{\kQ\times\kQ}_h(\tilde q^+,\tilde q^-),\\[5pt]
         \mbox{$q^-$ and $\tilde q^+$ constant, and $q^+=\tilde q^-$.}
       \end{array}\right.
    \]
  \item
    The discrete evolution is defined to advance from $(q^+,q^-)$ to
    $(\tilde q^+,\tilde q^-)$ such that $\bigl(h,(q^+,q^-),(\tilde
    q^+,\tilde q^-)\bigr)$ is critical.
  \item 
    Given a continuous Lagrangian system $L\colon\!T\kQ\to\RR$, a
    discrete Lagrangian is \defemph{order~$r$} if
    \[
      L(v)=L^{\kQ\times\kQ}_h\bigl(\tau_{\kQ}\circ F_h^{X_E}(v),\tau_{\kQ}(v)\bigr)
      +O(h^{r+1}),
    \]
    in which, of course, any order~$r$ accurate approximation
    $F_h^{X_E}+O(h^{r+1})$ may be substituted for the exact
    flow~$F_h^{X_E}$.
\end{enumerate}
This is the variational principle used in the example context of
Section~\ref{sec:preview-by-example}, and also
in~\cite{MarsdenJE-WestM-2001-1}.

The equivalence of the two formalisms is based on
Lemma~\ref{lem:TM-diffeo}, which is obtained in the proof of
Proposition~2.9 of~\cite{CuellC-PatrickGW-2007-1}.

\begin{lemma}\label{lem:TM-diffeo}
  Let $\kM$ be a manifold and let $(\psi,\alpha^+,\alpha^-)$ be a
  discretization of the tangent bundle of $\kM$.  Then
  $\Psi(h,v)\equiv\bigl(h,\partial_h^+(v),\partial_h^-(v)\bigr)$ is a
  diffeomorphism between open neighborhoods
  $U\setminus\bigl(\sset{0}\times\!T\kM\bigr)$ and
  $W\setminus\bigl(\sset{0}\times(\kM\times \kM)\bigr)$.
\end{lemma}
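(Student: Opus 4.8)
The plan is to desingularize $\Psi$ by the blow-up already used in the proof of Theorem~\ref{thm:discrete-existence-uniqueness}, and then to recover $\Psi$ from that blow-up away from $h=0$. The singularity is visible at $h=0$: there $\partial_0^+=\partial_0^-=\tau_\kM$, so $\Psi(0,v)=\bigl(0,\tau_\kM(v),\tau_\kM(v)\bigr)$ collapses each fiber of $\!T\kM$ onto the diagonal of $\kM\times\kM$. This is exactly why the slices $\sset0\times\!T\kM$ and $\sset0\times(\kM\times\kM)$ are excised. The real content is that $\Psi$ is a local diffeomorphism away from $h=0$ \emph{uniformly up to} $h=0$; the naive derivative of $(\partial_h^+,\partial_h^-)$ degenerates as $h\to0$, so a rescaling by $1/h$ is forced.

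First I would fix a tubular neighborhood $\zeta\colon E\to\kM\times\kM$ of the normal bundle $E\=\set{(v_m,-v_m)}{v_m\in\!T\kM}$ to $\Delta(\kM\times\kM)$, normalized as in Equation~\eqref{420}. Because $\bigl(\partial_h^+(v),\partial_h^-(v)\bigr)$ lies on the diagonal when $h=0$, the map $(h,v)\mapsto\zeta^{-1}\bigl(\partial_h^+(v),\partial_h^-(v)\bigr)$ takes values in the zero section $0(E)$ on $\sset0\times\!T\kM$, so Proposition~\ref{prp:vb-div-by-zero} permits division by $h$ and yields a $C^{k-1}$ map $\hat\Psi\colon\RR\times\!T\kM\to\RR\times E$,
\[
  \hat\Psi(h,v)\=\begin{cases}\displaystyle
    \Bigl(h,\tfrac1h\zeta^{-1}\bigl(\partial_h^+(v),\partial_h^-(v)\bigr)\Bigr),&h\ne0,\\[6pt]
    \bigl(h,e(v)\bigr),&h=0,\end{cases}
\]
where $e(v)$ is the value provided by Proposition~\ref{prp:vb-div-by-zero}.

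The crux is to evaluate $\hat\Psi$ on $\sset0\times\!T\kM$ and to verify that its derivative there is an isomorphism. By Equation~\eqref{101} the $h$-derivative of $\bigl(\partial_h^+(v),\partial_h^-(v)\bigr)$ at $h=0$ is $(\dot\alpha^+v,\dot\alpha^-v)$, a pair of vectors in the single fiber $\!T_{\tau_\kM(v)}\kM$; inserting this into Equation~\eqref{420} and using $\dot\alpha^+-\dot\alpha^-=1$ gives $e(v)=\tfrac12(v,-v)$, i.e.\ $\tfrac12 v$ under the identification $E\cong\!T\kM$, $v\mapsto(v,-v)$. Hence $\hat\Psi(0,\cdot)$ is the fiber-scaling diffeomorphism $v\mapsto\tfrac12 v$, so $\!T\hat\Psi$ is block-triangular with invertible diagonal blocks on $\sset0\times\!T\kM$ and $\hat\Psi$ is injective there. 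The inverse function theorem makes $\hat\Psi$ a local diffeomorphism near each point of $\sset0\times\!T\kM$, and Theorem~1 of~\cite{CuellC-PatrickGW-2007-1} upgrades this to a diffeomorphism between open neighborhoods $\hat U\supseteq\sset0\times\!T\kM$ and $\hat W\supseteq\sset0\times E$.

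Finally I would read off $\Psi$ from $\hat\Psi$. Writing $M(h,e)\=(h,he)$ for fiberwise scalar multiplication, a direct check gives $\Psi=(\mathrm{id}_\RR\times\zeta)\circ M\circ\hat\Psi$ wherever $h\ne0$. Both $\mathrm{id}_\RR\times\zeta$ and the restriction of $M$ to $\set{(h,e)}{h\ne0}$ are diffeomorphisms (the latter with inverse $(h,e)\mapsto(h,e/h)$), so on $U\=\hat U$ the composite $\Psi$ is an injective local diffeomorphism on $U\setminus(\sset0\times\!T\kM)$, hence a diffeomorphism onto its open image, which is $W\setminus\bigl(\sset0\times(\kM\times\kM)\bigr)$ for a suitable open neighborhood $W$ of $\sset0\times\Delta(\kM\times\kM)$. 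The main obstacle is the blow-up step: expressing the difference of the two boundary maps invariantly on a bare manifold, dividing by $h$, and above all evaluating the limit to confirm the uniform nondegeneracy $e(v)=\tfrac12 v$. Everything after that is a formal assembly of diffeomorphisms.
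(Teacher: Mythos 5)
Your argument matches, almost step for step, the blow-up machinery the paper itself uses: the paper does not prove this lemma in-text (it imports it from the proof of Proposition~2.9 of~\cite{CuellC-PatrickGW-2007-1}), but your construction of $\hat\Psi$ is exactly the construction of $\hat\varphi$ in the proof of Theorem~\ref{thm:discrete-existence-uniqueness}, with one copy of $\!T\kM$ instead of two. Your key computation is correct: using Equation~\eqref{101}, the normalization~\eqref{420} of the tubular neighborhood $\zeta$, and $\dot\alpha^+-\dot\alpha^-=1$, Proposition~\ref{prp:vb-div-by-zero} gives $e(v)=\frac12(v,-v)$, precisely paralleling the computation $e(v,\tilde v)=\frac12(v+\tilde v)$ in that proof; and your appeal to Theorem~1 of~\cite{CuellC-PatrickGW-2007-1} to upgrade slice-wise nondegeneracy to a semiglobal diffeomorphism is how the paper handles the maps $\Theta^i$ in Theorem~\ref{thm:discrete-contact-order}. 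One small caveat: $\hat\Psi$ is only $C^{k-1}$, so your inverse-function-theorem step needs $k\ge2$, a hypothesis the lemma does not state (Definition~\ref{def:discretization-of-tangent-bundle} allows $k=1$); you should flag this. The factorization $\Psi=(\mathrm{id}_\RR\times\zeta)\circ M\circ\hat\Psi$ off $h=0$ is verified correctly and does give injectivity and openness of the image.

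The one genuine gap is your final clause, that the open image ``is $W\setminus\bigl(\sset{0}\times(\kM\times\kM)\bigr)$ for a suitable open neighborhood $W$'': this is not automatic from having a diffeomorphism $\hat\Psi\colon\hat U\to\hat W$ between \emph{arbitrary} neighborhoods of the slices. The image is $(\mathrm{id}_\RR\times\zeta)\circ M\bigl(\hat W\setminus(\sset{0}\times E)\bigr)$ with $M(h,e)=(h,he)$, and a point $(h,e')$ with $\|e'\|$ of fixed small size pulls back under $M$ to $(h,e'/h)$, which escapes every set of the form $\set{(h,e)}{|h|<\epsilon(e)}$ as $h\to0$ whenever $\epsilon$ decays faster than $\sim1/\|e\|$ along the fibers; so $M(\hat W\setminus(\sset{0}\times E))$ need not be a punctured neighborhood of any part of the zero slice. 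To get the stated form you must show $\hat W$ can be taken \emph{wedge-shaped}, e.g.\ $\set{(h,e)}{|h|(1+\|e\|)<\delta}$ for a fiber metric, whose $M$-image is $\set{(h,e')}{|h|+\|e'\|<\delta}$ minus the slice --- and since an arbitrary open neighborhood of $\sset{0}\times E$ need not contain such a wedge (take $\epsilon(e)=e^{-\|e\|}$), establishing injectivity of $\hat\Psi$ on a wedge is additional content, not ``formal assembly.'' The geodesic example shows the wedge is the true shape of the statement: there $\Psi_h$ is injective exactly on $\|v\|<\mathrm{inj}/h$. This missing step is presumably the substance of the cited Proposition~2.9 of~\cite{CuellC-PatrickGW-2007-1}; everything before it in your write-up is sound.
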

\noindent Given $L^{\kQ\times\kQ}_h$, choose any order~$r$
discretization $(\psi,\alpha^+,\alpha^-)$ of $\kQ$, define
\[[eq:L-QxQ-from-Tq]
  L_h(v)\equiv L^{\kQ\times\kQ}_h\bigl(\partial_h^+(v),\partial_h^-(v)\bigr),
\]
and consider the discrete Lagrangian system
$(L_h,\psi,\alpha^+,\alpha^-)$. This is order~$r$ by
Definition~\ref{def:order-r-discretizations}, and $\Psi$ intertwines
the objectives and constraints of the discrete variational principles
on $\!T\kQ$ and $\kQ\times\kQ$. Therefore $\Psi$ is a bijective
correspondence between their critical points. Conversely,
if~$(L_h,\psi,\alpha^+,\alpha^-)$ is a discrete Lagrangian system, then
define $L^{\kQ\times\kQ}_h$ such that \eqref{eq:L-QxQ-from-Tq} holds,
and the same correspondence is obtained. Thus, the version of discrete
mechanics on~$\!T\kQ$ and the standard version on~$\kQ\times\kQ$ are
entirely equivalent, in an order-preserving way.

%
\bibliographystyle{plain}\bibliography{0references/abbrev,0references/ref}

\begin{thebibliography}{10}

\bibitem{BobenkoAI-SchroderP-SullivanJM-ZieglerGM-2008-1}
A.~I. Bobenko, P.~Schr{\"o}der, J.~M. Sullivan, and G.~M. Ziegler, editors.
\newblock {\em Discrete differential geometry}.
\newblock Birkh\"auser Verlag, 2008.

\bibitem{BobenkoAI-SurisYB-2005-1}
A.~I. Bobenko and Y.~B. Suris.
\newblock Discrete differential geometry. {C}onsistency as integrability.
\newblock arXiv:math.DG/0504358v1, 2005.

\bibitem{BouRabeeN-MarsdenJE-2008-1}
N.~Bou-Rabee and J.~E. Marsden.
\newblock {H}amilton--{P}ontryagin integrators on {L}ie groups part {I}:
  introduction and structure-preserving properties.
\newblock {\em Found. Comput. Math.}, 2008.

\bibitem{CuellC-PatrickGW-2007-1}
C.~Cuell and G.~W. Patrick.
\newblock Skew critical problems.
\newblock {\em Regul. Chaotic Dyn.}, 12:589--601, 2007.

\bibitem{CuellC-PatrickGW-2008-1}
C.~Cuell and G.~W. Patrick.
\newblock Geometric discrete analogues of tangent bundles and constrained
  {L}agrangian systems, 2008.
\newblock arXiv:0807.1511v1 [math-ph].

\bibitem{GelfandIM-FominSV-1963-1}
I.~M. Gelfand and S.~V. Fomin.
\newblock {\em Calculus of variations}.
\newblock Prentice-Hall Inc., 1963.
\newblock Revised English edition translated and edited by Richard A.
  Silverman.

\bibitem{MarsdenJE-PatrickGW-ShkollerS-1998-1}
J.~E. Marsden, G.~W. Patrick, and S.~Shkoller.
\newblock Multisymplectic geometry, variational integrators, and nonlinear
  {PDE}s.
\newblock {\em Comm. Math. Phys.}, 199:351--395, 1998.

\bibitem{MarsdenJE-WestM-2001-1}
J.~E. Marsden and M.~West.
\newblock Discrete mechanics and variational integrators.
\newblock {\em Acta Numerica}, 10:357--514, 2001.

\bibitem{PatrickGW-1991-1}
G.~W. Patrick.
\newblock {\em Two axially symmetric coupled rigid bodies: relative equilibria,
  stability, bifurcations, and a momentum preserving symplectic integrator}.
\newblock PhD thesis, University of California at Berkeley, 1991.

\bibitem{WeinsteinA-1996-1}
A.~Weinstein.
\newblock {Lagrangian} mechanics and groupoids.
\newblock {\em Fields Inst. Comm.}, 7:207--231, 1996.

\bibitem{WendlandtJM-MarsdenJE-1997-1}
J.~M. Wendlandt and J.~E. Marsden.
\newblock Discrete integrators derived from a discrete variational principle.
\newblock {\em Physica~D}, 106:233--246, 1997.

\bibitem{YoshimuraH-MarsdenJE-2006-1}
H.~Yoshimura and J.~E. Marsden.
\newblock Dirac structures in {L}agrangian mechanics. {I}. {I}mplicit
  {L}agrangian systems.
\newblock {\em J. Geom. Phys.}, 57:133--156, 2006.

\bibitem{YoshimuraH-MarsdenJE-2006-2}
H.~Yoshimura and J.~E. Marsden.
\newblock Dirac structures in {L}agrangian mechanics. {II}. variational
  structures.
\newblock {\em J. Geom. Phys.}, 57:209--250, 2006.

\end{thebibliography}
%
\end{document}